\documentclass[12pt,a4paper]{amsart}

\usepackage{amsmath,amsfonts,amssymb,amsthm,mathrsfs}

\usepackage{graphicx}
\usepackage{color}

\allowdisplaybreaks[1]

\usepackage{marginnote}

\setlength{\oddsidemargin}{0cm}
\setlength{\evensidemargin}{0cm}
\setlength{\topmargin}{0cm}
\setlength{\textheight}{23.7cm}
\setlength{\textwidth}{16cm}

\newcommand{\be}{\begin{equation}}
\newcommand{\ee}{\end{equation}}
\newcommand{\bea}{\begin{eqnarray}}
\newcommand{\eea}{\end{eqnarray}}
\newcommand{\bean}{\begin{eqnarray*}}
\newcommand{\eean}{\end{eqnarray*}}
\newcommand{\rf}[1]{(\ref {#1})}

\newcommand{\xn}{|\!|\!|}

\newcommand{\lM}{\underline M}
\newcommand{\uM}{\overline M}

\def\dx{\,{\rm d}x}
\def\dy{\,{\rm d}y}

\def\dr{\,{\rm d}r}

\def\ds{\,{\rm d}s}

\def\drho{\,{\rm d}\rho}

\def\eps{\varepsilon}
\newcommand{\ep}{\varepsilon}

\def\s{\sigma}

\def\r{\varrho}
\def\xn{|\!|\!|}

\def\mn{|\!\!|}

\def\mn2{|\!\!|_{M^{d/2}}}

\def\y{y_\ast}

\newcommand{\M}{\mathcal {M}}

\newcommand{\re}{\mathbb R}
\newcommand{\R}{\mathbb R}

\newcommand{\N}{\nabla}

\renewcommand{\ep}{\eps}
\newcommand{\lam}{\lambda}

\newcommand{\Del}{\Delta}

\newcommand{\sg}{\sigma}

\renewcommand{\r}{\rho}

              %----dotted Besov-----
              %----dotted Besov-----
        %----Besov norm-------
    %----Besov norm-------
                  %----Homoge Sobolev-----
       %----Homoge Hardy-----
                  %----dotted Lizorkin-Triebel-----
\def\<{\langle }

\renewcommand{\qed}{\qquad\kern1pt   %QED mark
   \vbox{\hrule height 0.6pt      %top
         \hbox{\vrule width 0.6pt %left
               \vbox{\vskip 6pt}  %skip
               \hskip 3pt
              \vrule width 1.3pt} %right
         \hrule depth 1.3pt}     %botom
   \kern1pt}

\newcommand{\eq}[1]{\begin{equation}#1\end{equation}}%
\newcommand{\spl}[1]{{\begin{split}#1\end{split}}}
\newcommand{\eqn}[1]{\begin{equation*}#1\end{equation*}}

%\setcounter{\section}{2}

%-------color abbreviations--------

\newcommand{\rd}{\color{red}}

%\color{green}}

\renewcommand{\rd}{\color{black}}

\newtheorem{theorem}{Theorem}
\newtheorem{proposition}[theorem]{Proposition}
\newtheorem{lemma}[theorem]{Lemma}
\newtheorem{corollary}[theorem]{Corollary}

\theoremstyle{definition}

\def\qed{\hfill $\square$}

\theoremstyle{remark}
\newtheorem{remark}[theorem]{Remark}

\numberwithin{equation}{section}
\numberwithin{theorem}{section}

%%%%%%%%%%%%%%%%%%%%%%%%%%% NOVEMBER 2021

\author[P. Biler]{Piotr Biler}
\address[P. Biler]{
 Instytut Matematyczny, Uniwersytet Wroc\l awski,
 pl. Grunwaldzki 2/4, \hbox{50-384} Wroc\-\l aw, Poland}
\email{Piotr.Biler@math.uni.wroc.pl; \ orcid.org/0000-0002-2293-6964}

\author[G. Karch]{Grzegorz Karch}
\address[G. Karch]{ 
 Instytut Matematyczny, Uniwersytet Wroc\l awski,
 pl. Grunwaldzki 2/4, \hbox{50-384} Wroc\-\l aw, Poland}
\email{Grzegorz.Karch@uwr.edu.pl;\ orcid.org/0000-0001-9390-5578}

\author[H. Wakui]{Hiroshi Wakui}
\address[H. Wakui]{ 
 Instytut Matematyczny, Uniwersytet Wroc\l awski,
 pl. Grunwaldzki 2/4, \hbox{50-384} Wroc\-\l aw, Poland;\   Faculty of Science Division I, Tokyo University of Science, 1-3 Kagurazaka,
Shinjuku-ku, Tokyo 162-8601, Japan} 
\email{%Hiroshi.Wakui@uwr.edu.pl,\ 
hiroshi.wakui@rs.tus.ac.jp; \ orcid.org/0000-0002-4676-4669 }

\title[Parabolic-elliptic Keller--Segel model]{
Large self-similar solutions \\ of the parabolic-elliptic Keller--Segel model}

\begin{document}

\begin{abstract} 
We construct radial self-similar solutions of the, so called, minimal parabolic-elliptic Keller--Segel model in several space dimensions with radial, nonnegative initial conditions which are below the Chandrasekhar solution --- the singular stationary solution of this system. 
\end{abstract}

\keywords{self-similar solutions; parabolic-elliptic Keller--Segel system}

\subjclass[2010]{35Q92; 35K55; 35C06; 35B51}

\date{\today}
%%%%%%%%%%%%%%%%%%%%%%%%%%%%%%%%%%%%
\maketitle

\baselineskip=18.5pt
%%%%%%%%%%%%%%%%%%%%%%%%%%%%%%%%%%%%

\section{Statement of the problem}

We begin our discussion of the following minimal Keller--Segel chemotaxis system
\begin{equation} \label{eq;DD}
 \spl{
  & u_t - \Del u + \N \cdot (u \N\psi) = 0,&&\quad t > 0,\ \  x \in \re^{d},\\
  &-\Del \psi = u,&&\quad t>0,\ \ x \in \re^{d},
 }
\end{equation}
with $d\ge 3$,  with noticing that 
it is preserved by the scaling transformation
\begin{equation}\label{scale-u}
  u_\lambda (t,x) =\lambda^{{2}}u(\lambda^{2}t, \lambda x), \quad 
  \psi_\lambda (t,x) =\psi (\lambda^{2}t, \lambda x)
  \ \ \ {\rm for\ every}\ \ \lambda>0, 
\end{equation} 
namely, if 
$(u,\psi)$ is a solution to system  \eqref{eq;DD} then so is 
$(u_\lambda,\psi_\lambda)$. 
Each solution invariant under this scaling, i.e.   satisfying 
\begin{equation}\label{u:ul}
  u(t,x) =u_\lambda(t,x), \quad \psi(t,x)=\psi_\lambda(t,x)\quad \text{for all}\quad t > 0,\;  x\in \mathbb{R}^{d},\; 
\text{and} \; \lambda > 0,
\end{equation}
 is called  a {\it self-similar solution} to system \eqref{eq;DD}. 
Choosing $\lambda^2=\frac{1}{t}$ in  equations \eqref{u:ul},
 we obtain that each self-similar solution has the form
\begin{equation}\label{eq;self-similar}
 u(t,x)=\frac{1}{t}U\left(\frac{x}{\sqrt{t}}\right),
 \quad
  \psi (t,x)=\Psi\left(\frac{x}{\sqrt{t}}\right)  
\end{equation} 
with 
\begin{equation}\label{eq;self-similar2}
U(x)=u(1,x)
 \quad
 \text{and} \quad \Psi(x)=\psi(1,x).
\end{equation} 
If a self-similar solution \eqref{eq;self-similar}  corresponds to an initial datum, namely, if the limit 
$$
u_0(x)\equiv \lim_{t\to 0} \frac{1}{t}U\left(\frac{x}{\sqrt{t}}\right)
$$ 
exists  (for example, in the sense of distributions),
then the initial datum $u_0$ has to be homogeneous of degree $-2$.
In the two dimensional case, nonnegative self-similar solutions of system \eqref{eq;DD}
correspond to  multiples of the Dirac measure supported in the origin and the existence of such solutions is well-known -- see references to this result in the next section. 
In our main result stated in the following theorem, we consider the case $d\geq 3$, and we construct radial, nonnegative  self-similar solutions to system \eqref{eq;DD} corresponding to  initial data of the form  
$u_0(x)=\frac{C}{|x|^2}$ for some constant $C>0$.

\begin{theorem}\label{main}
Let $d\geq 3$. 
System \rf{eq;DD} supplemented with the initial condition 
\begin{equation}\label{u0:m}
 u_{0}(x) 
 =
 \varepsilon 
 \frac{ 2(d-2)}{|x|^{2}} 
 \qquad 
 \text{with} 
 \quad 
 0<\varepsilon < 1,
\end{equation}
has a self-similar solution of the form \eqref{eq;self-similar},
where the self-similar profile $U$
 is a nonnegative and radial function  satisfying 
 $U\in C^\infty(\re^d)\cap L^\infty (\re^d)$ 
as well as the estimate
\begin{equation}\label{U:R}
\sup_{R>0} R^{2-d}\int_{\{|x|<R\}} U(x)\dx\leq 2 \s_d \varepsilon 
\quad
 {\rm and}
\quad
\lim_{R \to \infty}
R^{2-d}
\int_{\{ |x| < R \}}
U(x)
\, {\rm d}x
=
2\sg_{d} \ep
.
\end{equation}
Moreover, we have  $\nabla\Psi =\nabla E_d*U$ with 
$E_d(x)=\frac{1}{(d-2)\s_d}|x|^{2-d},$ where the number $\s_d=\frac{2\pi^{{d}/{2}}}{\Gamma\left(\frac{d}{2}\right)}$ denotes the measure of the unit sphere $\mathbb{S}^{d-1}\subset \mathbb{R}^d$.
\end{theorem}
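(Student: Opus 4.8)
The plan is to reduce the construction of self-similar solutions to a single second-order ODE for the radial profile and then solve it via a monotone iteration scheme that stays below the Chandrasekhar singular solution. Writing $U(x)=U(r)$ with $r=|x|$ and introducing the cumulative mass
\begin{equation*}
M(r)=\int_{\{|y|<r\}}U(y)\,{\rm d}y=\s_d\int_0^r s^{d-1}U(s)\ds,
\end{equation*}
the second equation $-\Del\psi=u$ gives, by Newton's formula, $\psi_r(r)=-M(r)/(\s_d r^{d-1})$. Substituting the self-similar ansatz \eqref{eq;self-similar} into the first equation of \eqref{eq;DD} and integrating once over the ball $\{|y|<r\}$, I expect to obtain a closed first-order relation for $M$ of the form
\begin{equation*}
M'(r)=\s_d r^{d-1}U(r),\qquad
r^{d-1}U'(r)+\Bigl(\tfrac{r^{d}}{2}+\tfrac{M(r)}{\s_d}\Bigr)U(r)=\text{(flux term)},
\end{equation*}
so that the whole system collapses to one scalar ODE. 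The Chandrasekhar solution corresponds to the explicit singular mass $\uM(r)=\s_d(d-2)r^{d-2}$ (equivalently $U_{\rm sing}(x)=2(d-2)/|x|^2$), and the initial datum \eqref{u0:m} is exactly $\varepsilon$ times this singular profile, which fixes the parameter $\varepsilon<1$ as the ``fraction of critical mass.''

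First I would recast the problem as a fixed-point problem for $M$ on $(0,\infty)$ using the integrated ODE, writing $M=\Phi(M)$ where $\Phi$ is an integral operator built from the Duhamel-type formula for the linear part (heat semigroup in self-similar variables) plus the nonlinear drift $\N\cdot(u\N\psi)$. The natural function space is suggested by estimate \eqref{U:R}: I would work in the cone of radial nonnegative functions satisfying $\sup_{R>0}R^{2-d}M(R)\le 2\s_d\varepsilon$, i.e. those lying below the barrier $\uM$ up to the factor $2\varepsilon$. The key structural fact is that $\Phi$ should be order-preserving and should map this cone into itself, because the quadratic nonlinearity $MM'/(\s_d r^{d-1})$ is controlled by the barrier precisely when $\varepsilon<1$; the strict subcriticality is what prevents the Chandrasekhar threshold from being reached.

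Second I would run a monotone iteration starting from $M_0\equiv 0$ (or from the linear self-similar heat solution with datum \eqref{u0:m}) and define $M_{n+1}=\Phi(M_n)$. Using the comparison principle for the scalar ODE I expect the sequence to be monotone increasing and bounded above by the barrier $2\varepsilon\uM$, hence convergent to a fixed point $M$; the corresponding $U=M'/(\s_d r^{d-1})$ is then the desired profile and \eqref{U:R} follows directly from the barrier bound. Regularity $U\in C^\infty(\re^d)\cap L^\infty(\re^d)$ would come from bootstrapping the ODE: near $r=0$ the bound $M(r)\lesssim\varepsilon r^{d-2}$ forces $U$ to be bounded rather than singular (this is where $\varepsilon<1$ is decisive, since the singular solution is the borderline case), and away from the origin standard elliptic/parabolic regularity for the smooth ODE coefficients gives smoothness and the $L^\infty$ bound. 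Finally the representation $\nabla\Psi=\nabla E_d*U$ is just the integral form of $-\Del\Psi=U$ recorded in the statement.

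The main obstacle I anticipate is establishing the sharp barrier estimate with the correct constant, i.e. showing that the iteration genuinely respects the bound $R^{2-d}M(R)\le 2\s_d\varepsilon$ and does not drift up to the critical Chandrasekhar mass as $r\to\infty$ or accumulate a singularity as $r\to0$. This requires a delicate maximum-principle or differential-inequality argument for the quantity $R^{2-d}M(R)$ that exploits both the self-similar drift term $\tfrac{r}{2}U$ (which provides confinement at large $r$) and the strict inequality $\varepsilon<1$; balancing these two effects uniformly in $r$, rather than just locally, is the crux of the proof.
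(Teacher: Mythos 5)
Your proposed route (reduce to the stationary profile ODE in self-similar variables and run a monotone iteration in the cone below the barrier) is genuinely different from the paper's, which never solves the profile equation directly: the paper solves the \emph{evolution} problem \eqref{eq;local-mass} for truncated bounded data \eqref{eq;truncated-initial-u} via Theorem \ref{glo}, proves a parabolic comparison principle (Theorem \ref{comp}), and extracts self-similarity of the monotone-in-$K$ limit from the scaling identity $\lambda^{2-d}M^{K}(\lambda^{2}t,\lambda r)=M^{K\lambda}(t,r)$ of Lemma \ref{lem;self-similar-prop-approximation}. Your plan could in principle be carried out, but as written it has two genuine gaps.

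First, the order-preservation of your map $\Phi$ is asserted, not proved, and it is not automatic: the nonlinearity is $\frac{1}{\sigma_d r^{d-1}}MM_r$, and a pointwise ordering $M_1\le M_2$ gives no information about the ordering of $M_{1,r}$ and $M_{2,r}$. To make the iteration monotone you must either linearize with a frozen drift coefficient and prove a comparison principle for the resulting equation --- which has a singular coefficient at $r=0$, lives on an unbounded interval, and compares solutions growing like $r^{d-2}$ --- or prove a sub/supersolution comparison theorem directly for the nonlinear equation. This is precisely the content of the paper's Theorem \ref{comp}, whose proof needs the auxiliary barrier $\mu{\rm e}^{2bt}(r+1)^{\nu}$ with $\nu=d-2\varepsilon$, the gradient bound \eqref{bdd}, and the decomposition $\lM\,\lM_r-\uM\,\uM_r=\lM_r(\lM-\uM)+\uM(\lM_r-\uM_r)$ at a touching point; your sketch contains no substitute for it, yet your whole scheme (monotonicity, convergence, and the preservation of the bound \eqref{U:R}) rests on it.

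Second, and more seriously, your regularity argument at the origin is wrong as stated. The bound $M(r)\le\varepsilon\,2\sigma_d r^{d-2}$ does \emph{not} force $U$ to be bounded near $r=0$: any mass function behaving like $c\,r^{d-2}$ with $0<c<2\varepsilon\sigma_d$ satisfies this bound while its density $\sim c(d-2)/(\sigma_d|x|^{2})$ is singular at the origin. Excluding such sub-Chandrasekhar singular behavior of the limit is the hardest part of the paper: it is done by comparing $M^{*}$ with the explicit solution of the linear Bessel-type problem \eqref{eq;Bessel} with $\lambda=d-1-2\varepsilon$ (Lemmas \ref{lem;Bessel-semigroup} and \ref{lem;super-sol-msol}), which yields the uniform integrability $\int_0^{y_{*}}\mathcal{M}(y)\,y^{1-d}\,{\rm d}y<\infty$ --- and this is exactly where $\varepsilon<1$ enters, since for $\varepsilon=1$ that integral diverges --- followed by the integrating-factor analysis of the profile equation in Lemma \ref{lem;profile-boundedness} showing that $U$ has a finite positive limit at the origin and tends to $0$ at infinity. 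Your appeal to ``bootstrapping the ODE'' skips this entire mechanism, so the claims $U\in C^\infty(\re^d)\cap L^\infty(\re^d)$ remain unproved in your proposal.
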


In this work, we limit ourselves to radial self-solutions, although 
there are nonradial self-similar solutions to system   \rf{eq;DD} with $d\geq 3$ corresponding to sufficiently small initial data which are homogeneous of degree $-2$. 
We recall such results below, in the next section.

 The upper bound for $\varepsilon$ in the initial condition \rf{u0:m} is related   to the assumption that $u_0(x)$ stays  below  the  singular stationary solution (the {\em Chandrasekhar solution}) system \rf{eq;DD}    explicitly given by 
\eq{\label{eq;singular-stationary-sol}
 u_{C}(x)
 =
 \frac{2(d-2)}{|x|^{2}} \qquad  
(\text{with} \quad  \nabla\psi_C =\nabla E_d*u_C).
 }%
 This explicit solution  plays an important role in our analysis and, for example, 
 is used to obtain (with a suitable comparison principle)  estimate \rf{U:R} because 
$$ 
 R^{2-d}\int_{\{|x|<R\}} u_C(x)\dx 
 =
 2(d-2)\sg_{d} R^{2-d}\int_{0}^{R} s^{d-3} \ds
 =2\sg_{d}
 \quad \text{for each}\quad R>0.
$$ 

Self-similar solutions constructed in Theorem \ref{main} describe the large time behavior of a large class of other solutions to the Cauchy problem for system \eqref{eq;DD} and we discuss it below in Corollary \ref{cor;self-similar-asymptotics}.

We postpone further comments on Theorem \ref{main} to Section \ref{sec:Thm1}, and now we recall  results  on the nonexistence of self-similar solutions.
Note first that the presence of the singular self-similar solution \eqref{eq;singular-stationary-sol} does not contradict existence of other (not necessarily smooth) self-similar solutions corresponding to the initial condition \eqref{u0:m} with either $\varepsilon= 1$ or $\varepsilon>1$
(see the next section for related comments concerning  the semilinear   heat equation). 
The following remark  states, however, that this number cannot be too large.
 
 \begin{remark} 
\label{main:blowup}
 System \rf{eq;DD} supplemented with the initial condition \rf{u0:m} cannot have any local-in-time solution (hence, self-similar solutions neither) if 
\eq{\label{C(d):0}
 \varepsilon > 
 C(d)
 \equiv 
 \frac{16}{\Gamma\left(\frac{d}{2}\right)}
 \int_0^\infty  
  {\rm e}^{-\r^2}\r^{d+1}(2(d-2)+4\r^2)^{-1}{\rm d}
 \r,}%
which follows from \cite[Theorem 2.2]{BZ-JEE}. 
Below, in Proposition \ref{prop;bounds-C(d)}  
we show that the integral defining the constant $C(d)$ in relation \rf{C(d):0} 
(which can be expressed in terms of the incomplete Gamma function)
satisfies the estimates
\eq{\label{bound-C(d)}
 1
 <
 \frac{2}{d-1}
 \left(
  \frac{\Gamma\left(\frac{d+1}{2}\right)}{\Gamma\left(\frac{d}{2}\right)}
 \right)^2 
 < 
 C(d) 
 <
 \left(
  \frac{2}{d-2}
 \right)^{\frac12}
 \frac{\Gamma\left(\frac{d+1}{2}\right)}{\Gamma\left(\frac{d}{2}\right)} 
 <
 \frac{d-1}{d-2}
 \le 
 2.
}%
In particular, the integral in expression \eqref{C(d):0} satisfies    $C(d)\to 1$ as $d\to\infty$, thus asymptotically for large dimensions, both Theorem  \ref{main} and Remark \ref{main:blowup} provide  ``almost''  optimal assumptions on either the existence or nonexistence of radial, nonnegative self-similar solutions. 
 \end{remark}

 The remainder of this paper is constructed in the following way.
In the next section, we recall our motivations to study the Keller--Segel model \rf{eq;DD} and its self-similar solutions. 
 In Section \ref{sec:Thm1}, we present  main ideas of the proof of Theorem \ref{main} and we comment the obtained result on the existence of self-similar solutions. 
 A comparison principle for radial distributions of sufficiently regular solutions to system  
  \rf{eq;DD} is proved in Section~\ref{sec:comp}.
Self-similar solutions from Theorem \ref{main} are constructed in Section \ref{sec:self} by a suitable approximation procedure.
Regularity of self-similar profiles $U$ is shown in Section \ref{sec:reg}. 
The proof of estimates in Remark \ref{main:blowup} is in Section \ref{sec:non}.

%%%%%%%%%%%%%%%%%%%%%%%%%%%%%%
\section{Review of other results on self-similar solutions }\label{sec:known-result-self-similar-solutions}

Our motivations to study system \eqref{eq;DD}
 come from Mathematical Biology, where these equations  are a simplified Keller--Segel  system modeling chemotaxis, see e.g. \cite{B-AMSA,B-book,BDP}. 
The unknown variables $u=u(t,x)$ and $\psi=\psi(t,x)$ denote the density of the population of microorganisms (e.g. swimming bacteria or slime mold),  and the density of a chemical secreted by themselves that attracts them and makes them to aggregate, respectively. 
Another important interpretation of system \eqref{eq;DD} comes from Astrophysics, where the unknown function $u=u(t,x)$ is the density of gravitationally interacting massive particles in a cloud (of stars, nebulae, etc.), and $\psi=\psi(t,x)$ is the Newtonian potential (``mean field'') of the mass distribution $u$, see \cite{B-SM,BHN,CSR}.  
System \rf{eq;DD} can also be interpreted as a drift-diffusion equation 
(see equation~\rf{KS} below)  with a linear diffusion and a quadratic nonlocal transport terms.

Self-similar solutions to such an equation play an important role in a study of large time asymptotics of other solutions to the Cauchy problem and have been already studied by several  authors.
Let us recall some results from  previous works where self-similar solutions have been constructed either to  the parabolic--elliptic system \eqref{eq;DD} or to the doubly parabolic Keller--Segel system when the equation for $\psi$ is replaced by a linear diffusion equation $\tau \psi_t=\Delta \psi+u$ with a fixed parameter $\tau>0$.

It is well-known that, in the  case $d=2$, a self-similar solution to  the parabolic-elliptic system  \eqref{eq;DD}
exists for each initial datum $u(\cdot,0)=M\delta_0$ with the Dirac measure $\delta_0$ and for each $M\in(0,8\pi)$.
Such solutions  are unique and smooth for $t>0$, moreover, 
other    global-in-time solutions to the Cauchy problem for 
 \eqref{eq;DD} with  initial data satisfying 
with $M\equiv \int_{\R^2} u_0(x)\dx<8\pi$ have an asymptotically self-similar large time behavior. We refer to works \cite{NSY02, NS04, NS08, BDP,BKLN2,CM17,BM14,CEM14}    and to references therein for proofs and for a discussion of such  results.   

On the other hand,  for the doubly parabolic Keller--Segel system the existence of ``large'' self-similar solutions depend in a sensitive way on the value of the coefficient $\tau>0$, see \cite{Mizutani-Muramoto-Yoshida,Mizutani-Nagai,BCD}. 
In particular, it is shown in \cite{BCD} that there is a  unique self-similar radial solution for each $M\in[0,8\pi)$ but for $\tau\gg 1$ there exist also nonunique self-similar radial solutions with each $M\in(8\pi,M(\tau))$, and $M(\tau)\to\infty$ as $\tau\to\infty$.

For $d\geq 3$, self-similar solutions of the parabolic-elliptic  problem \rf{eq;DD} 
as well as of its doubly parabolic counterpart 
have been constructed for small initial data in various function spaces with norms invariant under  scaling properties \eqref{scale-u}.
A construction of those self-similar solutions heavily depends on the semigroup approach which usually need  smallness assumption imposed  on initial data. 
Several results on 
the existence of  self-similar solutions with small initial conditions in scaling invariant spaces can be found 
e.g. in \cite{B-SM,B-AMSA,BCGK,K-JMAA,Kozono-Sugiyama,Lem,NS08}.
We refer also to the work by Senba  \cite[Theorem 3]{Senba-05} containing  a result on ``big'' self-similar solutions of the parabolic-elliptic model \eqref{eq;DD}.

Results of this work have been partially motivated by results on self-similar solutions 
of the Cauchy problem for the nonlinear heat equation
\begin{align}
&u_t=\Delta u+|u|^{\alpha}u,\qquad t > 0,\ \  x \in \re^{d},\label{nheat1}\\
&u(0,x)=\varepsilon  u_C(x),\label{nheat2}
\end{align}
where the function $u_C(x)=A|x|^{-{2}/{\alpha}}$, with a certain explicit number  $A=A(d,\alpha)>0$, is a  stationary singular solution to the nonlinear heat equation. For $\alpha=1$, this is  a counterpart of the Chandrasekhar solution  \eqref{eq;singular-stationary-sol}
for the Keller--Segel system. 
For $d\geq 3$, $\alpha >\frac{2}{d-2}$, and $\varepsilon \in (0,1)$, problem   \eqref{nheat1}--\eqref{nheat2} has a smooth self-similar solution, see \cite[Lemma 10.3]{GV97}.
Results on the existence of self-similar solutions to this problem with  $\varepsilon=1$ are contained in \cite[Lemma 10.1 and Theorem 10.4]{GV97}.
Moreover, by results in \cite{SW}, 
if $\frac{2}{d-2}<\alpha<\frac{4}{d-4-2\sqrt{d-1}}$, then there exists $\varepsilon>1$ (close to $1$) such that 
problem \eqref{nheat1}--\eqref{nheat2} has 
 two smooth self-similar solutions.

%%%%%%%%%%%%%%%%%%%%

\section{Comments on Theorem \ref{main} and ideas of its proof}\label{sec:Thm1}

Since the second equation in system   \rf{eq;DD}  is not uniquely solved with respect to $\psi$,  we always assume that 
\be
\nabla \psi=\nabla E_d\ast u  \label{fundsol}
\qquad \text{with} \quad 
 E_d(x)=\tfrac{1}{(d-2)\s_d}\tfrac{1}{|x|^{d-2}}\quad \text{for}\quad d\ge 3. 
\ee 
Consequently, we consider in fact the Cauchy problem for the nonlocal transport equation 
\begin{equation}\label{KS}
u_t-\Delta u+\nabla\cdot(u\nabla E_d\ast u)= 0,  \qquad t>0, \ \ x\in\R^d.
\end{equation}
For radially symmetric solutions,  
we transform  system \eqref{eq;DD} (or equation \eqref{KS}) supplemented with an initial condition $u(0,x)=u_0(x)$
into the problem for the radial mass distribution function
(see e.g. \cite[equation (12)]{BHN} and calculations in the proof of Theorem \ref{main})  
\be
M(t,r)=\int_{\{|x|<r\}} 
u (t,x) \dx =\s_d\int_0^r u(t,s)s^{d-1}\,{\rm d}s \quad \text{with}\quad  r=|x|,
\label{M-u}
\ee 
namely, the initial-boundary value problem 
\eq{\label{eq;local-mass}
 \spl{ 
 &M_{t} - M_{rr}   + \frac{d-1}r M_{r} - \frac{1}{\sg_{d}r^{d-1}} MM_{r}=0,&&\quad t>0, \ \ 0<r<\infty,\\
 & M(t,0) =0,&&\quad t>0,\\
 & M(0,r) =M_{0}(r),&&\quad 0<r<\infty.
 }
}%
Of course, from relation \rf{M-u}  then we have 
\be 
u(t,x)=\frac{1}{\s_dr^{d-1}}M_r(t,r) \qquad \text{with} \quad  r=|x|.\label{u-M}
\ee

Since   system \eqref{eq;DD} has the scaling invariance \rf{scale-u}, the first  equation in \eqref{eq;local-mass} is also invariant under the scaling 
\be\label{scale-M}
 M_{\lam}(t,r)
 =\lam^{2-d}M(\lam^{2}t,\lam r)\qquad \text{for each \ \ $\lam > 0$. }
\ee
Thus, in order to construct self-similar solutions to  
  problem \eqref{eq;local-mass}, we choose  
   the following initial datum (cf.  equation \eqref{u0:m}) 
 \begin{equation}\label{M0:s}
 M_{0}(r)= \int_{\{|x|<r\}}\varepsilon \frac{2(d-2)}{|x|^{2}}\dx=
    \varepsilon 2\s_d r^{d-2}.
\end{equation}

Our approach, based on this alternative formulation of system \rf{eq;DD} for radially symmetric functions, allows us to deal with more regular functions $M(t,r)$ satisfying  an evolution PDE in one space variable, however, with singular coefficients. 
Here, we are carrying our analysis  of solutions to problem \eqref{eq;local-mass}
 using some ideas from  \cite{YY,BKZ-AiM,BKP} and, in particular, 
 a rather subtle new comparison principle in  Theorem \ref{comp} below. 
This comparison principle implies that the whole sequence of suitable approximating solutions of problem \eqref{eq;local-mass}
converges to a solution of the same problem with  a singular initial datum \eqref{M0:s}, not merely a subsequence. 
At the same time, this leads to the uniqueness of solutions 
to problem  \eqref{eq;local-mass} with the homogeneous initial datum \eqref{M0:s}
 as well as to the scaling invariance of solutions. In that way, self-similar solutions 
 to problem \eqref{eq;DD} and \eqref{u0:m}
 are constructed for $\ep<1$.

%%%%%%%%%%%%%%%%%%%%%%%%%%%%%%%%%

\section{Comparison principle for radial mass distribution function} \label{sec:comp}

We begin by recalling the recent  result from \cite{BKP} on the existence of global-in-time solutions to the Cauchy problem for system \rf{eq;DD} corresponding to sufficiently regular (in the scale of Morrey spaces) but not necessarily small initial data.  
Recall here  that the homogeneous Morrey spaces  $M^s(\R^d)$ are defined  by their norms 
\bea
|\!\!| u|\!\!|_{M^s}&\equiv& \sup_{R>0,\, x\in\R^d}R^{d(1/s-1)} 
\int_{\{|y-x|<R\}}|u(y)|\dy.\label{hMor}
\eea  
Moreover, 
the  {\em radial concentration} of a locally integrable nonnegative function $u$ is defined by  
\be
\xn u\xn\equiv\sup_{R>0}R^{2-d}\int_{\{|y|<R\}} u(y) \dy.\label{r-conc}
\ee
Clearly, $\xn u\xn\le |\!\!|u|\!\!|_{M^{{d}/{2}}}$ but, in fact, these are equivalent quantities for nonnegative radial functions, see e.g. \cite[Lemma 7.1]{BKZ-AiM}. 
The role of the notion of the radial concentration and the Morrey space norms in the analysis of the Keller--Segel model  is explained and discussed in \cite{BKZ-AiM,BKP}.

\begin{theorem}[{\cite[Theorem 2.1]{BKP}}]
\label{glo}
Let $d\geq 3$. 
Assume that a radially symmetric nonnegative initial condition $u_0\in M^{d/2}(\R^d)$ satisfies  
\be\label{a1}
\xn u_0\xn\equiv\sup_{R>0}R^{2-d}\int_{\{|x|<R\}} u_0(x)\dx<2\s_d.
\ee 
There exists $p\in \left(\frac{d}{2},d\right)$ such that if, moreover, $u_0\in M^p(\R^d)$, then the corresponding global-in-time solution $u=u(t,x)$ of problem \rf{eq;DD} exists in the space
\begin{equation}\label{3*}
  {\mathcal C}_w \Big( [0,T), M^{d/2}(\R^d) \cap M^p(\R^d) \Big)\cap \Big\{ u: (0,T) \to L^\infty (\R^d): { \sup_{0 < t < T}} t^{\frac{d}{2p}}\| u(t)\|_\infty <\infty \Big\}
\end{equation}
for each $T>0$. Moreover,  this solution  is smooth, nonnegative,  radial, and satisfies the bound 
\begin{equation}\label{global_sol}
\xn u(t)\xn=\sup_{R>0}R^{2-d}\int_{\{|x|<R\}} u(t,x)\dx< 2\s_d\qquad  
\text{for all}\quad  t>0.
\end{equation}
\end{theorem}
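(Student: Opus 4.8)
The strategy is to construct the solution by a fixed-point argument in a scaling-adapted function space and to obtain the sharp bound \eqref{global_sol} by comparison with the Chandrasekhar solution \eqref{eq;singular-stationary-sol}. Using \eqref{fundsol} and the heat semigroup $e^{t\Del}$, a mild solution of \eqref{KS} is a fixed point of
$$
u=e^{t\Del}u_0-B(u,u),\qquad B(u,v)(t)\equiv\int_0^t\nabla e^{(t-s)\Del}\cdot\bigl(u(s)\,\nabla E_d\ast v(s)\bigr)\ds ,
$$
which I would seek in the space from \eqref{3*}, equipped with
$$
\|u\|_{X_T}\equiv\sup_{0<t<T}\|u(t)\|_{M^{d/2}}+\sup_{0<t<T}\|u(t)\|_{M^{p}}+\sup_{0<t<T}t^{\frac{d}{2p}}\|u(t)\|_{\infty}.
$$
The linear term is controlled by the boundedness of $e^{t\Del}$ on the Morrey spaces $M^{d/2}$ and $M^p$ together with the smoothing bound $\|e^{t\Del}u_0\|_\infty\lesssim t^{-d/(2p)}\|u_0\|_{M^p}$; the subcritical exponent $p\in(\tfrac d2,d)$ is introduced precisely so that this $L^\infty$ decay matches the weight $t^{d/(2p)}$ and produces the extra regularity recorded in \eqref{3*}.

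The analytic core is a bilinear estimate $\|B(u,v)\|_{X_T}\lesssim\|u\|_{X_T}\|v\|_{X_T}$, which I would obtain by writing $\nabla E_d\ast v$ as a first-order Riesz potential of $v$, estimating the product $u\,(\nabla E_d\ast v)$ in an intermediate Morrey space by H\"older, and applying the gradient heat-smoothing, which supplies a factor $(t-s)^{-1/2}$ together with a Morrey gain; the time exponents are dictated by the parabolic scaling so that the $s$-integral converges. For the radially symmetric functions at hand the drift simplifies dramatically: by Newton's theorem,
$$
\bigl|\nabla E_d\ast u(x)\bigr|=\frac{1}{\s_d|x|^{d-1}}\int_{\{|y|<|x|\}}u(y)\dy=\frac{M(t,|x|)}{\s_d|x|^{d-1}},
$$
so the concentration \eqref{r-conc} controls the velocity pointwise. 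A contraction on a suitable ball then yields a unique local mild solution; nonnegativity follows from the maximum principle for \eqref{KS}, radial symmetry from uniqueness, and $C^\infty$-smoothness for $t>0$ from a parabolic bootstrap once $u(t)\in L^\infty$.

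The delicate and decisive point is to upgrade the threshold in \eqref{a1} to the sharp constant $2\s_d$ and to make the solution global. Here I would pass to the radial mass distribution $M(t,r)$ of \eqref{M-u}, which solves the one-dimensional problem \eqref{eq;local-mass}, and compare it with the mass distribution $M_C(r)=2\s_d r^{d-2}$ of the Chandrasekhar solution \eqref{eq;singular-stationary-sol}, a stationary solution of the same equation. The comparison principle (Theorem \ref{comp}) applied to $M(t,\cdot)$ and $M_C$ shows that the strict inequality $R^{2-d}M(t,R)<2\s_d$ is propagated in time, which is exactly \eqref{global_sol}; since for nonnegative radial functions $\xn u(t)\xn$ is equivalent to $\|u(t)\|_{M^{d/2}}$, this keeps the critical norm strictly below threshold for all $t$. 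Combined with a uniform bound on the subcritical $M^p$ norm, obtained by the same comparison together with parabolic smoothing, the standard continuation criterion then excludes finite-time blow-up, so the local solution extends to every $T>0$.

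The genuine difficulty is that the hypothesis supplies only the order-one bound $\xn u_0\xn<2\s_d$ in the scaling-invariant norm, so no global smallness is available and a naive contraction yields at best a smaller, non-sharp threshold. The sharp constant $2\s_d$—tied to the Chandrasekhar solution, whose concentration equals exactly $2\s_d$—can only come from the comparison principle for the singular-coefficient equation \eqref{eq;local-mass}. Making that comparison rigorous, and in particular showing that the \emph{strict} inequality (not merely $\le$) propagates so that the solution never touches the critical profile, is the crux on which both global existence and the bound \eqref{global_sol} ultimately rest.
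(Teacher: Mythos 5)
First, a structural point: the paper does not prove Theorem~\ref{glo} at all --- it is quoted as \cite[Theorem 2.1]{BKP}, and the present article only records the consequences it needs (Proposition~\ref{prop;?}) and later \emph{extends} the comparison argument of \cite{BKP} in Theorem~\ref{comp}. So your proposal has to be measured against the strategy of \cite{BKP} as summarized here. Your overall architecture is indeed that strategy: a fixed point for $u=e^{t\Del}u_0-B(u,u)$ in a space mixing $M^{d/2}$, $M^p$ and the weight $t^{d/2p}$ in $L^\infty$ (the subcritical $p$ playing exactly the role you assign it), followed by a maximum-principle argument on the mass function $M(t,r)$ to reach the sharp threshold $2\s_d$ and globality. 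Up to that point the sketch is sound.

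The genuine gap is in the step you yourself call decisive. You propose to apply the comparison principle (Theorem~\ref{comp}) to the pair $\big(M(t,\cdot),\,M_C\big)$ with $M_C(r)=2\s_d r^{d-2}$, and this fails for three concrete reasons. (i) Hypothesis \eqref{subcrit} of Theorem~\ref{comp} requires \emph{both} functions to satisfy $\le \eps 2\s_d r^{d-2}$ with $\eps\in(0,1)$; the Chandrasekhar mass is exactly critical, so it is excluded, and worse, the hypothesis demands this bound for the solution $M$ itself --- which is precisely the estimate you are trying to establish, so the application is circular. (ii) The failure is not an artifact of how the theorem is stated: its proof takes $\nu=d-2\eps$ and needs $\nu>d-2$ both to force the auxiliary function $z$ in \eqref{zet} to attain an interior maximum (via \eqref{diff}) and to produce the contradiction $z_t(t_0,r_0)<0$; at $\eps=1$ this is exactly borderline, reflecting the real obstruction that $u_C$ is an exact stationary solution, so a touching point with it yields no strict sign. (iii) Even for a subcritical barrier, hypothesis \eqref{bdd} asks for $\sup_{t>0,r>0}M_r/(\s_d r^{d-1})\le b$, which fails down to $t=0$ for Morrey data (only $t^{d/2p}\|u(t)\|_\infty$ is finite); this is why the present paper applies Theorem~\ref{comp} only to solutions with bounded data (Lemma~\ref{lem;monotonicity-approximation}). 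What \cite{BKP} actually does --- recorded as item (3) of Proposition~\ref{prop;?} --- is compare the \emph{local} solution with the bent, strictly subcritical barrier $\min\{\eps 2\s_d r^{d-2},\,K r^{d-d/p}\}$, where $\eps=\xn u_0\xn/(2\s_d)<1$ is fixed by the data. This barrier is a strict supersolution (so a first-touching-time argument on the local solution is non-circular), it propagates the bound $\xn u(t)\xn\le 2\s_d\eps<2\s_d$, i.e. \eqref{global_sol}, and --- crucially --- its second piece propagates the subcritical Morrey decay $M(t,r)\le Kr^{d-d/p}$, which is exactly what the continuation criterion requires, since the local existence time depends on the $M^p$ norm. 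Your sketch asserts this $M^p$ control ``by the same comparison,'' but a comparison with $M_C$ alone can never deliver it, because $M_C$ carries no subcritical decay; without it the continuation argument, and hence global existence, does not close.
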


Let us formulate certain  properties of solutions constructed in Theorem \ref{glo} in a form suitable for this work. They are 
 either a direct consequence of Theorem \ref{glo}
 or  proved in the paper \cite{BKP}.

\begin{proposition}\label{prop;?}
 Under assumptions of Theorem \ref{glo}, let $u$ be the radial global-in-time solution  corresponding to the initial datum $u_{0} \in M^{d/2}(\re^{d}) \cap M^{p}(\re^{d})$.
 Denote 
 $$
  M(t,r)
  =\int_{\{|x|<r\}}
u(t,x) \dx \quad 
  \text{\rm and}
 \quad
  M_{0}(r)
  =
  \int_{\{|x|<r\}} 
u_{0}(x) \dx.
 $$
 Then $M$ has following properties:
 \begin{enumerate}
  \item $M \in {\mathcal C}^{1,2}( (0,\infty) \times (0,\infty) ) \cap {\mathcal C}( [0,\infty) \times [0,\infty) )$. 
  \item  $M_{r}(t,r) \ge 0$ for all $t>0$ and $r>0$.
  \item There exist $p\in \big(\frac{d}{2},d\big)$, 
  $\varepsilon  \in \big(0,\frac{d}{2p}\big)$, and $K>0$ such that
  $$
  0 
   <
   M(t,r)
   \le
   \min 
   \{ 
    \ep 2 \sg_{d}r^{d-2}, Kr^{d-d/p}
   \}
\qquad \text{for all \; $t >0$ and $r>0$.}
  $$
  \item
  For each $T>0$ and all $t\in (0,T]$ 
  there exist $c_0=c_0(T)>0$ and $c_1=c_1(T)>0$ such that 
  $$
    M(t,r)
    \le 
    c_{0}
    {t^{-\frac{d}{2p}}} 
    r^d 
   \quad
   \text{and}
   \quad 
    M_{r}(t,r) 
    \le 
    c_{1}t^{-\frac{d}{2p}} r^{d-1}
   \qquad \text{for all \ \ $r \ge 0$. }
  $$
 \end{enumerate}
\end{proposition}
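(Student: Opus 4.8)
The plan is to reduce every assertion to the single identity
\[
 M(t,r)=\sg_d\int_0^r u(t,s)\,s^{d-1}\ds ,
\]
which is nothing but the definition of $M$ written in radial coordinates, cf. \rf{M-u}. Differentiating in $r$ yields $M_r(t,r)=\sg_d\,r^{d-1}u(t,r)$, i.e. \rf{u-M}, so that property (2) is immediate from the nonnegativity of $u$ supplied by Theorem \ref{glo}. From this point on, I would read off each remaining property of $M$ from the corresponding property of $u$ recorded in (or following from) Theorem \ref{glo}.

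For the regularity in (1), the interior smoothness of $u$ on $(0,\infty)\times\re^d$ makes $r\mapsto M(t,r)$ smooth, so $M,M_r,M_{rr}$ are continuous on $(0,\infty)\times(0,\infty)$; moreover $M_t=\sg_d\int_0^r u_t(t,s)s^{d-1}\ds$ is continuous because $u$ solves \rf{KS}, and integrating \rf{KS} over $\{|x|<r\}$ (evaluating the boundary flux by means of $-\Del\psi=u$) reproduces the one-dimensional equation \rf{eq;local-mass}. This gives $M\in\mathcal{C}^{1,2}((0,\infty)\times(0,\infty))$. Continuity up to $\{r=0\}$ follows from $M(t,0)=0$ and the growth bound $M(t,r)\le Kr^{d-d/p}\to0$, whereas continuity up to $\{t=0\}$ is the one genuinely delicate point: I would test $u(t)$ against $\un_{\{|x|<r\}}$, approximate this indicator from inside and outside by smooth compactly supported functions, apply the weak continuity $u\in\mathcal{C}_w([0,T),M^{d/2}\cap M^p)$ to the smooth parts, and control the remainder near the sphere $\{|x|=r\}$ by the uniform concentration bound, obtaining $M(t,r)\to M_0(r)$ as $t\to0^+$.

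The bounds in (3) split according to the two scaling exponents. The estimate $M(t,r)\le \ep\,2\sg_d r^{d-2}$ is exactly the assertion that $\xn u(t)\xn=\sup_{r>0}r^{2-d}M(t,r)\le 2\sg_d\ep$ for all $t>0$; I would obtain it from the non-increase of the radial concentration established in \cite{BKP}, which upgrades \rf{global_sol} to this $\ep$-bound once $\xn u_0\xn\le 2\sg_d\ep$, and the exponent $p\in(\tfrac d2,d)$ from Theorem \ref{glo} may then be taken close enough to $\tfrac d2$ that $\ep<\tfrac{d}{2p}$. The estimate $M(t,r)\le Kr^{d-d/p}$ follows from $M(t,r)=\int_{\{|x|<r\}}u(t)\le \|u(t)\|_{M^p}\,r^{d-d/p}$ together with the control of the $M^p$-norm from \cite{BKP}. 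Strict positivity $M(t,r)>0$ for $t,r>0$ comes from the strong maximum principle, since for nontrivial $u_0\ge0$ parabolic smoothing forces $u(t,\cdot)>0$ when $t>0$. Finally, (4) is immediate from the time-weighted $L^\infty$ bound built into the solution space of Theorem \ref{glo}: from $\|u(t)\|_\infty\le C_T\,t^{-d/2p}$ on $(0,T]$ one gets $M(t,r)=\int_{\{|x|<r\}}u(t)\le \|u(t)\|_\infty\,\tfrac{\sg_d}{d}r^d\le c_0 t^{-d/2p}r^d$ and $M_r(t,r)=\sg_d r^{d-1}u(t,r)\le c_1 t^{-d/2p}r^{d-1}$.

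I expect the single real obstacle to be the boundary behavior at $t=0$ in (1): converting weak-$*$ convergence in the Morrey spaces into the pointwise limit $M(t,r)\to M_0(r)$ requires ruling out any loss or gain of mass across the sphere $\{|x|=r\}$, and this is precisely where the uniform concentration bound \rf{global_sol} (and its $\ep$-refinement) enters. Everything else is a direct transcription of the properties of $u$ through the identities $M=\sg_d\int_0^r u\,s^{d-1}\ds$ and $M_r=\sg_d r^{d-1}u$.
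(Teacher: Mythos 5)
Your proposal is correct and follows essentially the same route as the paper: every item is read off from the properties of $u$ supplied by Theorem \ref{glo} via the identities $M=\sg_d\int_0^r u\,s^{d-1}\ds$ and $M_r=\sg_d r^{d-1}u$, with the key bound in item (3) ultimately deferred to \cite[Proposition 4.1]{BKP} and item (4) obtained from the time-weighted $L^\infty$ bound built into the solution space \eqref{3*}. The only difference is one of detail: the paper treats item (1) as immediate from the definition of $M$ and cites item (3) wholesale, whereas you flag continuity at $t=0$ as the delicate point and sketch a weak-continuity argument for it (note that the sandwich $\int u(t)\varphi_\delta^-\le M(t,r)\le \int u(t)\varphi_\delta^+$ together with continuity of $M_0$ already suffices there, so the uniform concentration bound is not really needed for that step).
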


\begin{proof}
The solution $u=u(t,x)$ constructed in Theorem \ref{glo} is smooth for $t>0$, hence, the regularity in item (1) results immediately from the definition of $M(t,r)$.
The function $u(t,x)$ is nonnegative thus the property (2) is a consequence of  formula \eqref{u-M}.
 The upper bound of $M(t,r)$ in item (3) plays a crucial role in the proof of Theorem \ref{glo}, and it has been proved in \cite[Proposition 4.1]{BKP}.
Since the solution $u(t,x)$ belongs to the space in \eqref{3*}, we have got 
${ \sup_{0 < t < T}} t^{\frac{d}{2p}}\| u(t)\|_\infty <\infty$ for each $T>0$ 
and consequently  the inequalities in item (4) follow from the definition of $M(t,r)$ and from equation \eqref{u-M}.
\end{proof}

The main goal of this section is to  extend the comparison principle from \cite[Proposition~4.1]{BKP} (a comparison of a subcritical solution with a special barrier function) to the case of general solutions constructed in Theorem \ref{glo}. 
Note that the applicability of comparison principles is usually restricted to sufficiently regular solutions, or more generally, sufficiently regular sub- and supersolutions of parabolic problems even with singular or degenerate coefficients  as those in problem  \rf{eq;local-mass}. 
In the context of equations for evolution problems for chemotaxis, related comparison principles are shown in \cite[Proposition 2.4]{YY} and \cite[Lemma 5.1]{BeWi}, the former in the case of the whole space and $u_0\in L^1(\R^d)$ densities (so bounded radial mass distribution functions %integrated densities 
$M$), the latter in finite domains $(0,T)\times(0,R)$ with $R<\infty$. 

\begin{theorem}[Comparison principle]\label{comp}
Consider functions $\lM$, $\uM\in{\mathcal C}^1([0,T]\times[0,\infty))\cap W^{2,\infty}_{\rm loc}((0,T)\times(0,\infty))$ such that 
\bea 
&&\lM(t,r)\le \eps 2\s_dr^{d-2},\ \ \ \uM(t,r)\le \eps 2\s_dr^{d-2},\ \ \eps\in(0,1),\label{subcrit}\\
&&\lM_r(t,r)\ge 0 \quad \text{and} \quad \uM_r(t,r)  \ge  0, \label{mono}\\
&&\text{either}\quad  
 \sup_{t>0, r>0}
  \frac{1}{\s_dr^{d-1}}\lM_r(t,r)\le b\ \quad \text{or}\quad  
  \sup_{t>0, r>0}
  \frac{1}{\s_dr^{d-1}} 
  \uM_r(t,r)\le b, \label{bdd}\\ 
&&\text{for some constant} \quad b>0,\nonumber\\
&&\lM_t\le \lM_{rr}-\frac{d-1}{r}\lM_r+\frac{1}{\s_dr^{d-1}}\lM\lM_r\quad \text{a.e. in} \quad (0,T)\times\R^d,\label{sub}\\
&&\uM_t\ge \uM_{rr}-\frac{d-1}{r}\uM_r+\frac{1}{\s_dr^{d-1}}\uM\uM_r\quad \text{a.e. in}\quad  (0,T)\times\R^d,\label{sup}\\
&&\lM(0,r)\le\uM(0,r),\label{ic}\\
&&\lM(t,0)\le\uM(t,0),\label{bc}\\
&& \sup_{t>0,\ r>0}r^{2-d}|\lM(t,r)-\uM(t,r)|<\infty.\label{diff}
\eea 
Then,  the following inequality holds true  
\be
\lM(t,r)\le \uM(t,r)\qquad 
\text{for every}\quad  (t,r)\in(0,T)\times(0,\infty).
\label{compM}
\ee 
\end{theorem}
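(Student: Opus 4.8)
The plan is to prove the comparison principle by a maximum-principle argument applied to the difference $w = \lM - \uM$, using the weight $r^{2-d}$ built into hypothesis \eqref{diff} to control behavior both at $r=0$ and $r=\infty$. The difficulty is that the coefficients in \eqref{eq;local-mass} are singular at the origin (the $\frac{d-1}{r}$ and $\frac{1}{\s_d r^{d-1}}$ terms), so a naive evaluation of the equation at an interior maximum is not immediately justified. My first step is to subtract \eqref{sub} from \eqref{sup} and write the resulting differential inequality for $w$. The quadratic term $\lM\lM_r - \uM\uM_r$ must be linearized; I would write it as $\frac{1}{\s_d r^{d-1}}(\lM\lM_r - \uM\uM_r) = \frac{1}{\s_d r^{d-1}}\bigl(\lM w_r + \uM_r\, w\bigr)$ so that the difference satisfies a linear parabolic inequality of the form
\begin{equation*}
 w_t \le w_{rr} - \frac{d-1}{r}w_r + a(t,r)\, w_r + c(t,r)\, w \quad \text{a.e.},
\end{equation*}
where $a = \frac{1}{\s_d r^{d-1}}\lM$ and $c = \frac{1}{\s_d r^{d-1}}\uM_r$. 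Here the subcriticality \eqref{subcrit} gives $a(t,r) \le 2(d-2)\eps$ (bounded, crucially with constant $<2(d-2)$ since $\eps<1$), and the one-sided bound \eqref{bdd} together with \eqref{mono} keeps one of the first-order coefficients controlled; the nonnegativity of the zeroth-order coefficient $c$ comes from \eqref{mono}.

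Next I would introduce the rescaled unknown $v = r^{2-d} w$, so that $v$ is bounded by \eqref{diff} and the singular drift $-\frac{d-1}{r}w_r$ gets partially absorbed. Computing the equation satisfied by $v$ turns the $r^{1-d}$-type singularities into lower-order terms; the key point is that after this change of variables the zeroth-order coefficient acquires a contribution from the term $a(t,r)\cdot(d-2)r^{-1}\cdot r^{d-2}v = \frac{d-2}{r}\,\lM\, r^{-1} v$, and the strict inequality $\eps<1$, i.e. $\lM \le 2(d-2)\eps\, r^{d-2}$ with $\eps<1$, is exactly what is needed to beat the critical threshold and keep the effective potential from blowing up in the wrong direction. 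This is the step where the sign condition $\eps<1$ — and not merely $\eps\le 1$ — is genuinely used, mirroring the role of the Chandrasekhar solution as a barrier. I expect this to be the main obstacle: verifying that the transformed operator admits a maximum principle despite the remaining $1/r$ singularity at the origin, and that the boundary condition \eqref{bc} controls $v$ near $r=0$.

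To run the argument rigorously I would use the standard device of adding a small perturbation to rule out an interior maximum. Define $v_\eta = v - \eta\,\phi(t,r)$ for small $\eta>0$, where $\phi$ is an auxiliary function (for instance $\phi = e^{\Lambda t}(1+r^2)^{\beta}$ for suitable constants $\Lambda,\beta$) chosen so that $\phi \to \infty$ as $r\to\infty$ and so that $\phi$ is a strict supersolution of the linear operator governing $v$. Because $v$ is bounded by \eqref{diff} while $\phi$ grows, any positive supremum of $v_\eta$ must be attained at an interior point $(t_*,r_*)$ with $r_*>0$ and $t_*>0$, or on the parabolic boundary. On the boundary, \eqref{ic} and \eqref{bc} force $v_\eta\le 0$. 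At an interior positive maximum the first- and second-order conditions $\partial_t v_\eta \ge 0$, $\partial_r v_\eta = 0$, $\partial_{rr} v_\eta \le 0$ contradict the strict differential inequality satisfied by $v_\eta$ (here the $W^{2,\infty}_{\rm loc}$ regularity lets me evaluate the equation a.e.\ and pass to a Lebesgue point, or alternatively invoke the strong form of the parabolic maximum principle for $W^{2,1}_{p,\rm loc}$ functions). Hence $\sup v_\eta \le 0$ for every $\eta$, and letting $\eta\to 0$ yields $v\le 0$, i.e.\ $w\le 0$, which is precisely \eqref{compM}.

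Finally, I would remark that the one-sided structure of hypothesis \eqref{bdd} — requiring the gradient bound for only one of $\lM,\uM$ — is exactly what makes the linearization flexible: one writes the quadratic difference using whichever of the two functions carries the bound (absorbing $a$ into $\lM$ if $\lM$ is bounded, or symmetrically into $\uM$), so that the resulting drift coefficient $a(t,r)$ is globally bounded and the auxiliary supersolution $\phi$ can be constructed with $\Lambda$ depending only on $b$, $\eps$, and $d$. The regularity assumption $\lM,\uM\in \mathcal{C}^1([0,T]\times[0,\infty))\cap W^{2,\infty}_{\rm loc}$ is what permits both the pointwise evaluation at the maximum and the continuity needed to localize, and it is the weakest regularity under which I would expect the argument to close.
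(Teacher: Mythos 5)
Your overall strategy---linearizing the quadratic term, weighting the difference by $r^{2-d}$, and using $\eps<1$ so that the effective potential picks up the strictly negative singular part $-2(d-2)(1-\eps)/r^2$---is sound in spirit and is essentially the same arithmetic that the paper exploits (there it appears through the choice of exponent $\nu=d-2\eps>d-2$ in the barrier). However, there is a genuine gap exactly at the point you flagged as the main obstacle and then did not resolve: the behavior at $r=0$. You assert that on the parabolic boundary \eqref{ic} and \eqref{bc} force $v_\eta\le 0$. But \eqref{bc} controls $w(t,0)=\lM(t,0)-\uM(t,0)$, not $v=r^{2-d}w$ near $r=0$: under the hypotheses you invoke one can perfectly well have $w(t,0)=0$ and $w_r(t,0)>0$, in which case (for $d=3$) $\lim_{r\searrow 0}v(t,r)=w_r(t,0)>0$, consistent with \eqref{diff}, \eqref{mono}, \eqref{subcrit}. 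In general $v$ is merely bounded near $r=0$ and need not extend continuously to $r=0$, so a positive supremum of $v_\eta$ may only be approached along a sequence $r_j\to 0$; it is then attained neither at an interior point (where you could evaluate derivatives) nor on a boundary where you have a sign. Your dichotomy ``interior maximum or parabolic boundary'' therefore fails, and with it the contradiction argument. This is precisely the difficulty the paper's proof is built around: it perturbs the \emph{unweighted} difference, $z=\lM-\uM-\mu{\rm e}^{2bt}(r+1)^\nu$, whose barrier is bounded below by $\mu>0$ at $r=0$, so that \eqref{bc} and continuity of $\lM-\uM$ give $z<0$ near $r=0$, while $\nu>d-2$ together with \eqref{diff} forces $z\to-\infty$ as $r\to\infty$; the maximum is then attained at a genuinely interior point, where the computation with $\nu=d-2\eps$ yields the contradiction.

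A second, more local error is that your coefficient bounds are misattributed. From \eqref{subcrit} one gets $a=\frac{\lM}{\s_d r^{d-1}}\le \frac{2\eps}{r}$, which is unbounded near the origin, not $a\le 2(d-2)\eps$; and \eqref{bdd} bounds $\frac{\uM_r}{\s_d r^{d-1}}$ (or $\frac{\lM_r}{\s_d r^{d-1}}$), i.e.\ the \emph{zeroth-order} coefficient $c$ of your linearization, not the drift. The correct bookkeeping is: choose between the decompositions $\lM\lM_r-\uM\uM_r=\uM_r w+\lM w_r$ and $\lM_r w+\uM w_r$ so that the \eqref{bdd}-bounded derivative multiplies $w$; the upper bound $c\le b$ (not the nonnegativity of $c$, which is useless for a maximum principle) is then absorbed by a factor ${\rm e}^{\Lambda t}$ with $\Lambda>b$ (the paper's ${\rm e}^{2bt}$); and the unbounded drift, controlled only by $2\eps/r$, is the term that must be beaten by the $-2(d-2)/r^2$ produced by the weight---this is where $\eps<1$ enters. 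Your Lebesgue-point remark for handling the $W^{2,\infty}_{\rm loc}$ second derivatives is acceptable in outline (the paper implements it via a sequence $r_j\searrow r_0$ avoiding a null set), but the two issues above, especially the boundary one, must be repaired before the argument closes.
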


\begin{proof}
The idea of the proof of the comparison principle is quite standard but we should be careful with the minimal regularity assumptions  on the functions to be compared. 
Note that relation \rf{diff} is a consequence of inequalities \rf{subcrit} but we prefer to keep it separately from size conditions \rf{subcrit}. 
For each   $\mu>0$ and $b>0$, $\nu>d-2\ge 1$  (which will be suitably chosen later on),
we consider the auxiliary function 
\be
z(t,r)=\lM(t,r)-\uM(t,r)-\mu{\rm e}^{2bt}( r+1)^\nu.
\label{zet} 
\ee 
We claim that 
$$
z(t,r)<0\ \ \ {\rm for\ every\ \ }(t,r)\in(0,T)\times(0,\infty).
$$
This will imply that 
$
 \lM(t,r)\le \uM(t,r)
$
for every 
$
 (t,r)\in(0,T)\times(0,\infty)
$. 
The function $z=z(t,r)$ is a continuous function and due to assumption \eqref{diff}
and because $\nu>d-2$, 
 there exists $(t_{0},r_{0}) \in [0,T] \times [0,\infty)$ such that
$
 z(t_{0},r_{0})
 =
 \max_{(t,r) \in [0,T] \times [0,\infty)} z(t,r)
$. 
Suppose {\em a~contrario} that 
$$
 z(t_0,r_0)=0 \quad \text{and}\quad z(t,r)<0 \quad \text{for all}\quad 
 t<t_0,\; r\geq 0. 
$$
In other words, the equality 
\be
 \lM(t_0,r_0)-\uM(t_0,r_0)
 =
 \mu{\rm e}^{2bt_0}(r_0+1)^\nu
 \label{values}
\ee
holds true. 
Note that, in fact, we have  $(t_0,r_0)\in(0,T]\times(0,\infty)$ by relations \rf{ic}--\rf{bc}, since $z=z(t,r)$ is strictly negative on the parabolic boundary of the domain $(0,T)\times(0,R)$ for sufficiently large $R>0$. 
Therefore, we obtain
\bea
z_t(t_0,r_0)\ge 0 \quad\text{and}\quad z_r(t_0,r_0)=0,\label{max1}
\eea
so, in particular 
\be
\lM_{r}(t_0,r_0)-\uM_{r}(t_0,r_0)=\mu{\rm e}^{2bt_0}\nu (r_0+1)^{\nu-1}. \label{gradient}
\ee
Using the $W^{2,\infty}_{\rm loc}(0,\infty)$ regularity, we obtain 
$$
z_r(t,r)=\int_{r_0}^rz_{rr}(t,\r)\drho,
$$
possibly except for a set $N\subset[r_0,\infty)$ of measure $0$, where inequalities \rf{sub}--\rf{sup} can also be violated. 
Then, there exists a sequence $(r_0,\infty)\setminus N\ni r_j\searrow r_0$ such that $z_{rr}(t_0,r_j)\le 0$.  
Indeed, otherwise $z_r(t_0,r_0)>0$ would hold, which contradicts equality in \rf{max1}. 
Thus, the  inequality 
\be
\lM_{rr}(t_0,r_0)-\uM_{rr}(t_0,r_0)
\le
\mu{\rm e}^{2bt_0}\nu(\nu-1) (r_0+1)^{\nu-2}\label{2der}
\ee
holds. 
Now, let us compute 
\begin{equation*}
\begin{split}
z_t=&\lM_t-\uM_t-2b\mu{\rm e}^{2bt}(r+1)^\nu\nonumber\\ 
\le&\lM_{rr}-\uM_{rr}-\frac{d-1}{r}\lM_r+\frac{d-1}{r}\uM_r +\frac{1}{\s_dr^{d-1}}\left(\lM\,\lM_r-\uM\,\uM_r\right)-2b\mu{\rm e}^{2bt}(r+1)^\nu\nonumber\\
 \le&\mu{\rm e}^{2bt_0}\nu(\nu-1) (r_0+1)^{\nu-2}+\frac{d-1}{r}\left(\uM_r-\lM_r\right) \nonumber\\
&+\frac{\lM_r}{\s_dr^{d-1}}(\lM-\uM) +\frac{\uM}{\s_dr^{d-1}}(\lM_r-\uM_r) -2b\mu{\rm e}^{2bt}(r+1)^\nu\nonumber
\end{split}
\end{equation*}
for all $r=r_j\searrow r_0$ and $t=t_0$.  
Observe that $\frac{\uM(t,r)}{\s_dr^{d-2}}\le 2\eps$, and $\sup_{(t,r)\in[t_0,T]\times[r_0,\infty)}\frac{\lM_r}{\s_dr^{d-1}}\le b<\infty$ by assumption \rf{bdd} (if we know that only  $\frac{\uM_r}{\s_dr^{d-1}}$ in assumption \rf{bdd} is uniformly bounded, then we write 
$
 \lM\,\lM_r-\uM\,\uM_r=\uM_r(\lM-\uM)+\lM(\lM_r-\uM_r)
$ and proceed analogously). 
 Passing to the limit and taking into account equations \rf{values},   \rf{gradient} and  inequality \rf{2der},  we obtain 
$$
z_t(t_0,r_0)\le \mu\nu (r_0+1)^{\nu-2}{\rm e}^{2bt_0}\left(\nu -1+(1+2\eps-d)\frac{r_0+1}{r_0}\right) -b\mu {\rm e}^{2bt_0}(r_0+1)^\nu.$$
Now, for $\eps<1$ we put $\nu=d-2\eps>d-2$, we obtain $z_t(t_0,r_0)<0$ which is  a contradiction with inequality in \rf{max1}. 
\end{proof}

%%%%%%%%%%%%%%%%
\section{Existence of self-similar solutions}\label{sec:self}

A self-similar solution from Theorem \ref{main} is 
obtained as a limit of smooth solutions 
$u^{K}=u^{K}(t,x)$
with a parameter $K>0$ corresponding to the truncated initial data
\eq{\label{eq;truncated-initial-u}
 u_{0}^{K}(x)
 \equiv
 \left\{
 \spl{
  &\ep \frac{d}{\sg_{d}}K^{2} ,\quad && |x| \le R(K),\\
  &\ep \frac{2(d-2)}{|x|^{2}} ,\quad && R(K) \le |x|,
 }
 \right.
}
where 
$$
 R(K)
 \equiv
 \left[
  \frac{2(d-2)\sg_{d}}{d}
 \right]^{\frac12}
 K^{-1}.
$$
Let us state properties of these approximating initial conditions as well as the corresponding solutions to system \eqref{eq;DD}.

\begin{lemma}\label{lem;existence-truncated-initial}
 For every $K>0$ and $\varepsilon \in (0,1)$, 
 there exists a nonnegative, radially symmetric 
 and smooth global-in-time solution $u^{K}=u^{K}(t,x)$  to system \eqref{eq;DD}
 with the initial datum $u_{0}^{K}=u_{0}^{K}(x)$.
\end{lemma}

\begin{proof} 
Since the initial datum 
$ 
 u_{0}^{K}
$
is bounded and 
satisfies $u_{0}^{K}(x) \le \ep u_{C}(x)$ for $x \in \re^{n}$, we have
$
 u_{0}^{K} 
 \in
 M^{d/2}(\re^{n}) 
 \cap M^{p}(\re^{n}) 
$ 
for each $p>d/2$ 
together with the estimate 
$$
 \xn u_{0}^{K} \xn
 \le
 \xn \ep u_{C} \xn
 =\ep 2\sg_{d}
 <2\sg_{d}.
$$
Thus, by Theorem \ref{glo}, the corresponding solution 
$
u^{K}
$ to problem
\eqref{eq;DD} is nonnegative, radially symmetric and smoooth global-in-time solution.
\end{proof}

For the solutions and their initial data from Lemma \ref{lem;existence-truncated-initial}, we define
the radial mass distribution functions \eq{\label{eq;integrated-density-approximation}
 M^{K}(t,r)
 \equiv
 \int_{\{|x|<r\}} 
u^{K}(t,x) \, \dx
 \quad
 \text{\rm and}
 \quad
 M^{K}_{0}(r)
 \equiv
 \int_{\{|x|<r\}} 
u^{K}_{0}(x) \, \dx
}
for $t>0$ and $r>0$,
and we study their properties.

\begin{lemma}\label{lem;local-sol-property} 
Let $M^{K}=M^{K}(t,r)$ be the radial mass distribution function of the solution obtained in Lemma \ref{lem;existence-truncated-initial}.
Then   $M^{K}$ is the unique solution to problem \eqref{eq;local-mass} with the initial data $M^{K}_{0}$ and  
\eq{\label{eq;apriori-M^{K}}
 M^{K}(t,r)
 \le
  \ep 2\sg_{d} r^{d-2} \qquad \text{for all} \quad t>0, \quad r>0.
}
\end{lemma}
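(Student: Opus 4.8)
The plan is to derive the bound $M^K(t,r)\le \ep\,2\sigma_d r^{d-2}$ as a direct application of the comparison principle of Theorem~\ref{comp}, taking as subsolution $\lM:=M^K$ and as supersolution the barrier $\uM(t,r):=\ep\,2\sigma_d r^{d-2}$, which is precisely the radial mass distribution of the subcritical multiple $\ep\,u_C$ of the Chandrasekhar profile. Theorem~\ref{comp} then yields $\lM\le\uM$ on $(0,T)\times(0,\infty)$ for every finite $T$; since the barrier is independent of $T$, the claimed estimate holds for all $t>0$ and $r>0$.

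First I would check that $\uM$ satisfies the supersolution inequality \eqref{sup}. As $\uM$ is stationary, $\uM_t=0$, while substituting $\uM=\ep\,2\sigma_d r^{d-2}$ into the right-hand side of the mass equation \eqref{eq;local-mass} produces the value $4\ep\sigma_d(d-2)(\ep-1)r^{d-4}$, which is nonpositive exactly because $\ep<1$; hence $\uM_t\ge \uM_{rr}-\frac{d-1}{r}\uM_r+\frac{1}{\sigma_d r^{d-1}}\uM\,\uM_r$. On the other side, by Lemma~\ref{lem;exsitence-truncated-initial} the function $u^K$ solves system \eqref{eq;DD}, so after the reduction to the radial mass variable its distribution $M^K$ solves the equation in \eqref{eq;local-mass} exactly, and $\lM=M^K$ satisfies \eqref{sub} with equality. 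The monotonicity \eqref{mono} is immediate, since $\uM_r\ge 0$ and, by \eqref{u-M}, $M^K_r=\sigma_d r^{d-1}u^K\ge 0$.

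Next I would verify the comparison data. The boundary condition \eqref{bc} holds with equality, as $M^K(t,0)=0=\uM(t,0)$ for $d\ge 3$. For \eqref{ic}, integrating the truncated datum \eqref{eq;truncated-initial-u} gives $M^K_0(r)=\ep K^2 r^d$ for $r\le R(K)$ and $M^K_0(r)=\ep\,2\sigma_d r^{d-2}-\tfrac{2}{d}\,\ep\,2\sigma_d R(K)^{d-2}$ for $r\ge R(K)$; in both ranges $M^K_0(r)\le \ep\,2\sigma_d r^{d-2}=\uM(0,r)$, using that $K^2 R(K)^2=\tfrac{2(d-2)\sigma_d}{d}<2\sigma_d$. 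Condition \eqref{diff} follows from the concentration bound $M^K(t,r)<2\sigma_d r^{d-2}$ guaranteed by \eqref{global_sol} of Theorem~\ref{glo}, which gives $\sup_{t,r}r^{2-d}|M^K-\uM|\le 2\sigma_d(1+\ep)<\infty$. I note here that hypothesis \eqref{subcrit} of Theorem~\ref{comp} is stated for both functions, but its role for the subsolution is only to secure \eqref{diff} (as remarked in the proof of Theorem~\ref{comp}); since I verify \eqref{diff} directly, no circularity arises and the weaker a priori bound $M^K<2\sigma_d r^{d-2}$ suffices.

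The step I expect to be the main obstacle is the gradient bound \eqref{bdd}. Because $\frac{1}{\sigma_d r^{d-1}}\uM_r=2\ep(d-2)r^{-2}$ blows up as $r\to 0$, this bound cannot come from the barrier, so I would use the alternative in \eqref{bdd} for $\lM=M^K$, where \eqref{u-M} gives $\frac{1}{\sigma_d r^{d-1}}M^K_r=u^K$. Thus \eqref{bdd} reduces to $\sup_{0<t<T}\|u^K(t)\|_\infty<\infty$ on each time slab, and this is exactly where the truncation is used: since $u_0^K\in L^\infty(\R^d)$, the solution $u^K$ remains in $L^\infty\big((0,T)\times\R^d\big)$ up to $t=0$, in contrast to the generic $t^{-d/2p}$ growth merely allowed by \eqref{3*}. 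I would also record that $M^K$ has the regularity $C^1([0,T]\times[0,\infty))\cap W^{2,\infty}_{\rm loc}((0,T)\times(0,\infty))$ required by Theorem~\ref{comp}, which follows from the smoothness of $u^K$ for $t>0$ together with the continuity up to the boundary from Proposition~\ref{prop;?}(1). With all hypotheses in place, \eqref{compM} yields $M^K\le\uM$, which is the assertion.
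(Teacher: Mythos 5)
Your proof is correct, but it takes a different route from the paper's. The paper disposes of this lemma in one line: it cites Proposition \ref{prop;?}(3), which in turn is only a quotation of \cite[Proposition 4.1]{BKP}, where the bound $M(t,r)\le \ep 2\sigma_d r^{d-2}$ is obtained by comparison with the same stationary barrier you use. You instead rederive the bound inside the present paper from Theorem \ref{comp}, taking $\lM=M^K$ and $\uM=\ep 2\sigma_d r^{d-2}$; your supersolution computation $\uM_{rr}-\frac{d-1}{r}\uM_r+\frac{1}{\sigma_d r^{d-1}}\uM\,\uM_r=4\ep\sigma_d(d-2)(\ep-1)r^{d-4}\le 0$, your verification of $M_0^K(r)\le \ep 2\sigma_d r^{d-2}$ from the truncation, and your derivation of \eqref{diff} from \eqref{global_sol} are all correct. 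Two points deserve attention. First, hypothesis \eqref{subcrit} for the subsolution $M^K$ is precisely the conclusion being proved, so a black-box application of Theorem \ref{comp} would be circular; you correctly observe that in the branch of its proof where \eqref{bdd} is assumed for $\lM_r$, condition \eqref{subcrit} is invoked only for $\uM$ (its role for $\lM$ being merely to imply \eqref{diff}, which you check directly), so the circle is avoided --- but note that this means you are relying on the \emph{proof} of Theorem \ref{comp}, not on its statement. Second, your verification of \eqref{bdd} requires $\sup_{0<t<T}\|u^K(t)\|_\infty<\infty$ up to $t=0$; this is plausible for the bounded data $u_0^K$ by standard local theory, but it is not among the paper's stated results (Proposition \ref{prop;?}(4) only yields $M_r(t,r)\le c_1 t^{-d/(2p)}r^{d-1}$, which degenerates as $t\to 0$). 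Since the paper's own invocation of Theorem \ref{comp} in Lemma \ref{lem;monotonicity-approximation} rests on the same unstated fact, this is a shared looseness rather than a defect peculiar to your argument. In sum: your route buys self-containedness (the lemma follows from the paper's own comparison principle rather than an external reference), while the paper's route buys brevity and sidesteps the two delicate points above.
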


\begin{proof}
By Lemma \ref{lem;existence-truncated-initial}, Proposition \ref{prop;?} 
and  Theorem \ref{comp},
the radial mass distribution function 
$
M^{K}
$ 
is the unique solution to \eqref{eq;local-mass} with the initial data $M_{0}^{K}(r)$
and
satisfies property (3) of Proposition \ref{prop;?}. 
\end{proof}

\begin{lemma}\label{lem;self-similar-prop-approximation}
 For each $K>0$ and  $\lam > 0$,
 we have the following scaling property
 \begin{equation}\label{MKl}
  \lam^{2-d}M^{K}(\lam^{2} t,\lam r)
  =M^{K\lam}(t,r)
  \qquad \text{for all} \quad t>0,\;r>0.
 \end{equation}
\end{lemma}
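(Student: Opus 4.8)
The plan is to deduce the scaling identity \eqref{MKl} from the scaling invariance \eqref{scale-u} of system \eqref{eq;DD} combined with the uniqueness asserted in Lemma \ref{lem;exsitence-truncated-initial}. First I would fix $K>0$ and $\lam>0$ and set $v(t,x) \equiv \lam^{2} u^{K}(\lam^{2} t, \lam x)$; by the invariance \eqref{scale-u}, this $v$ is again a global-in-time solution of \eqref{eq;DD}, and it inherits from $u^{K}$ nonnegativity, radial symmetry and smoothness, so it lies in the class where Lemma \ref{lem;exsitence-truncated-initial} guarantees uniqueness. The whole argument then reduces to identifying the initial datum of $v$ with $u_{0}^{K\lam}$.

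The key step is to check that the truncated data transform consistently under the scaling, that is
\[
\lam^{2} u_{0}^{K}(\lam x) = u_{0}^{K\lam}(x) \qquad \text{for all } x \in \re^{d} .
\]
The one point that genuinely uses the specific shape of the truncation \eqref{eq;truncated-initial-u} is that the cut-off radius is scaling-compatible, $R(K)/\lam = R(K\lam)$, which is immediate from $R(K) = [2(d-2)\sg_{d}/d]^{1/2}K^{-1}$. On the outer region, homogeneity of degree $-2$ of $\ep\, 2(d-2)|x|^{-2}$ gives $\lam^{2} u_{0}^{K}(\lam x) = \ep\, 2(d-2)|x|^{-2}$; on the inner region the constant value scales as $\lam^{2} \ep\frac{d}{\sg_{d}}K^{2} = \ep\frac{d}{\sg_{d}}(K\lam)^{2}$. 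Together with the matching of the two thresholds, this is exactly $u_{0}^{K\lam}(x)$. By the uniqueness in Lemma \ref{lem;exsitence-truncated-initial} I then conclude $v = u^{K\lam}$, i.e. $\lam^{2} u^{K}(\lam^{2} t, \lam x) = u^{K\lam}(t,x)$.

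Finally I would pass to the mass distribution functions. Using definition \eqref{eq;integrated-density-approximation} and the change of variables $y = \lam x$ (so that $\dy = \lam^{d}\,\dx$ and $\{|x|<r\}$ becomes $\{|y|<\lam r\}$),
\[
M^{K\lam}(t,r) = \int_{\{|x|<r\}} \lam^{2} u^{K}(\lam^{2} t, \lam x)\dx = \lam^{2-d}\int_{\{|y|<\lam r\}} u^{K}(\lam^{2} t, y)\dy = \lam^{2-d} M^{K}(\lam^{2} t, \lam r),
\]
which is precisely \eqref{MKl}. I expect no serious obstacle here: the argument is essentially bookkeeping of the scalings, and the only verification requiring a moment's care is the compatibility $R(K)/\lam = R(K\lam)$ of the truncation radius, which is built into the definition of $R(K)$.
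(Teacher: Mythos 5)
Your proof is correct and follows essentially the same route as the paper: rescale $u^{K}$, verify that the truncated data satisfy $\lam^{2}u_{0}^{K}(\lam x)=u_{0}^{K\lam}(x)$ (including the compatibility of the cut-off radius), invoke the uniqueness from Lemma \ref{lem;exsitence-truncated-initial} to identify $\lam^{2}u^{K}(\lam^{2}t,\lam x)=u^{K\lam}(t,x)$, and then change variables in the mass integral. The only cosmetic difference is that the paper performs the change of variables on the initial mass distributions and you perform it directly at positive times, which is equivalent.
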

\begin{proof} 
We define the rescaled solution 
$
 u_{\lam}^{K}
 =
 u_{\lam}^{K}(t,x)
$ 
and the rescaled initial datum 
$
 u_{0,\lam}^{K}
 =
 u_{0,\lam}^{K}(x)
$
as follows
\eq{\label{eq;scaled-app-sol}
 u_{\lam}^{K}(t,x)
 \equiv
 \lam^{2}u^{K}(\lam^{2}t,\lam x)
 \quad
 \text{\rm and}
 \quad
 u_{0,\lam}^{K}(x)
 \equiv
 \lam^{2}u_{0}^{K}(\lam x).
} 
Obviously,
it holds that
\eqn{
 u_{0,\lam}^{K}(x) 
 =
 \left\{
 \spl{
  &\ep(K\lam)^{2}\frac{d}{\sg_{d}} ,\quad && |x| \le {R(K\lam)}\\
  &\ep \frac{2(d-2)}{|x|^{2}} ,\quad && {R(K\lam)} \le |x|
 }
 \right\} =u_0^{K\lam}(x).
}
Then we introduce rescaled radial mass distribution functions 
$$
 M_{\lam}^{K}(t,r)
 \equiv\lam^{2-d}M^K(\lam ^{2}t,\lam r)
 \quad
 \text{\rm and}
 \quad
 M_{0,\lam}^{K}(r)
 \equiv\lam^{2-d}M_{0}^{K}(\lam r)
$$
and, accordingly, we have
\eqn{
 \spl{
  M_{0,\lam}^{K}(r)
  =
  \lam^{2-d}
  \int_{\{|x|<\lam r\}} 
   u_{0}^{K}(x) 
  \, \dx
  =
  \int_{\{|x|<r\}}
   \lam^{2}u_{0}^{K}(\lam y) 
  \,\dy
  =
  \int_{\{|x|<r\}}
   u_{0}^{K\lam}(y) 
  \,\dy
  =
  M_{0}^{K\lam}(r).
 }
}
Thus, by 
%relation \rf{uKlam} and 
the uniqueness of solutions from Lemma \ref{lem;local-sol-property}, 
we obtain the following equality for each $K>0$  and $\lam>0$
\be\label{uKlam}
M_{\lam}^{K}(t,r)=M^{K\lam}(t,x)\qquad \text{for all}\quad t>0, \ \ x\in \R^{d},
\ee
which implies our desired identity \eqref{MKl}.
 \end{proof}

Furthermore, the comparison principle from Theorem \ref{comp} shows that 
the sequence
$
 \{ 
  M^{K}(t,r) 
 \}_{K>0}
$ 
is increasing monotonically with respect to $K>0$.

\begin{lemma}\label{lem;monotonicity-approximation}
 Let $\{ M^{K}(t,r) \}_{K>0}$ be a sequence of solutions to problem
 \eqref{eq;local-mass} with the initial data  
 $M_{0}^{K}=M_{0}^{K}(r)$.
 Then, for $0<K_{1} \le K_{2}$, 
 $$
  M^{K_{1}}(t,r) \le M^{K_{2}}(t,r)
  \qquad
  \text{for all}
  \quad t>0, \quad r>0.
 $$
\end{lemma}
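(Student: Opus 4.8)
The plan is to apply the comparison principle of Theorem~\ref{comp} with the choice $\lM = M^{K_1}$ and $\uM = M^{K_2}$ on an arbitrary time interval $(0,T)$, and then let $T\to\infty$. Both $M^{K_1}$ and $M^{K_2}$ are genuine solutions of the equation in problem~\eqref{eq;local-mass}, so each satisfies the subsolution inequality \eqref{sub} and the supersolution inequality \eqref{sup} with equality; in particular $\lM=M^{K_1}$ qualifies as a subsolution and $\uM=M^{K_2}$ as a supersolution. The regularity required, namely membership in $C^1([0,T]\times[0,\infty))\cap W^{2,\infty}_{\rm loc}((0,T)\times(0,\infty))$, together with the monotonicity \eqref{mono} and the subcriticality \eqref{subcrit}, are supplied by items (1) and (2) of Proposition~\ref{prop;?} and by Lemma~\ref{lem;local-sol-property}; if one is uneasy about $C^1$ regularity up to $t=0$, the principle may be applied on $[\delta,T]$ and $\delta\to 0^+$ taken at the end. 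Condition \eqref{bc} holds with equality since $M^{K}(t,0)=0$ for every $K$, and condition \eqref{diff} follows at once from \eqref{subcrit}, exactly as remarked at the start of the proof of Theorem~\ref{comp}: both functions lie between $0$ and $\varepsilon 2\s_d r^{d-2}$, so $r^{2-d}|M^{K_1}-M^{K_2}|\le \varepsilon 2\s_d$.

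The ordering of initial data \eqref{ic} I would verify by a direct pointwise comparison of $u_0^{K_1}$ and $u_0^{K_2}$. Since $R(K)$ is decreasing in $K$ while the constant value $\varepsilon\frac{d}{\s_d}K^2$ carried on the ball $\{|x|\le R(K)\}$ agrees with $\varepsilon\frac{2(d-2)}{|x|^2}$ exactly at $|x|=R(K)$, one checks for each fixed $x$ that the map $K\mapsto u_0^{K}(x)$ is nondecreasing: it increases like $\varepsilon\frac{d}{\s_d}K^2$ until $R(K)$ drops below $|x|$ and is constant thereafter. Hence $u_0^{K_1}(x)\le u_0^{K_2}(x)$ pointwise for $K_1\le K_2$, and integrating over $\{|x|<r\}$ gives $M_0^{K_1}(r)\le M_0^{K_2}(r)$, which is precisely \eqref{ic}.

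The one genuinely delicate hypothesis, and the step I expect to be the main obstacle, is the gradient bound \eqref{bdd}. By \eqref{u-M} we have $\frac{1}{\s_d r^{d-1}}M^{K}_r(t,r)=u^{K}(t,r)$, so \eqref{bdd} for $\uM=M^{K_2}$ amounts to showing that $u^{K_2}$ is bounded on $(0,T)\times(0,\infty)$. For $t$ bounded away from $0$ this is immediate from item (4) of Proposition~\ref{prop;?}, which gives $u^{K_2}(t,r)\le \frac{c_1(T)}{\s_d}\,t^{-\frac{d}{2p}}$; the remaining difficulty is the behaviour as $t\to 0^+$, where the general bound degenerates. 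Here I would exploit the fact that $u_0^{K_2}$ is bounded, so that standard parabolic regularity for the drift–diffusion equation \eqref{KS} with bounded initial datum keeps $\|u^{K_2}(t)\|_\infty$ bounded up to $t=0$ on the finite interval $[0,T]$, yielding a finite constant $b=b(K_2,T)$ as needed. With all hypotheses verified, Theorem~\ref{comp} gives $M^{K_1}(t,r)\le M^{K_2}(t,r)$ on $(0,T)\times(0,\infty)$ for every $T>0$, and since $T$ is arbitrary the inequality holds for all $t>0$ and $r>0$, which is the assertion of the lemma.
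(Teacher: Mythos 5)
Your proof is correct and follows essentially the same route as the paper: both arguments verify the hypotheses of Theorem~\ref{comp} with $\lM=M^{K_1}$, $\uM=M^{K_2}$ (pointwise monotonicity of $u_0^K$ in $K$ giving \eqref{ic}, Proposition~\ref{prop;?} and Lemma~\ref{lem;local-sol-property} giving \eqref{subcrit}--\eqref{bc}, and \eqref{subcrit} giving \eqref{diff}) and then apply the comparison principle. In fact you are more careful than the paper on the one delicate point, the uniform gradient bound \eqref{bdd} as $t\to 0^+$, which the paper silently subsumes under Proposition~\ref{prop;?} even though item (4) there degenerates like $t^{-d/(2p)}$; your appeal to boundedness of $u^{K_2}$ up to $t=0$ for the bounded data $u_0^{K_2}\in L^\infty$ is the right way to close that step (whereas your parenthetical fallback of working on $[\delta,T]$ would not work as stated, since the ordering at time $\delta$ is not known a priori).
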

\begin{proof} 
First, we observe that $u_{0}^{K_{1}}(x) \le u_{0}^{K_{2}}(x)$
for $K_{1} \le K_{2}$ by  definition of $u_{0}^{K}$ in  \eqref{eq;truncated-initial-u}.
According to Proposition \ref{prop;?}, the
functions $M^{K_{1}}, M^{K_{2}}$ satisfy assumptions \eqref{subcrit}--\eqref{bc} of Theorem \ref{comp}.
Moreover, since the  estimate \eqref{eq;apriori-M^{K}} is valid for all $K>0$, we have
$$
 |M^{K_{1}}(t,r)-M^{K_{2}}(t,r)|
 \le
 \ep 4 \sg_{d} r^{d-2}.
$$
Thus, the  uniform estimate \eqref{diff} also holds when
$(\underline{M},\overline{M})=(M^{K_{1}},M^{K_{2}})$.
Applying Theorem~\ref{comp}
we conclude 
$$
 M^{K_{1}}(t,r)
 \le
 M^{K_{2}}(t,r)
 \qquad
\text{for all} 
\quad 
t\ge 0,\quad
r \ge  0.
$$ 
\end{proof}

\begin{lemma}\label{lem;limit}
 Let $\{ M^{K}(t,r) \}_{K>0}$ be a sequence of solutions to problem
 \eqref{eq;local-mass} with the initial data  
 $M_{0}^{K}=M_{0}^{K}(r)$.
 Then
 \begin{equation}
 \label{eq;definition-M}
 \lim_{K\to \infty} M^K(t,r) \equiv M^*(t,r)\leq \ep 2 \sg_{d} r^{d-2}
 \qquad \text{for all} \quad t\geq 0, \; r\geq 0.
 \end{equation}
 The limit function 
$M^{*}$ has the self-similar property 
\eq{\label{eq;self-similar-prop}
 \lam^{2-d}M^*(\lam^{2}t,\lam r) = M^*(t,r)
 \qquad \text{for all} \quad  t\geq 0, \; r\geq 0
}
for each  $\lam > 0$.
\end{lemma}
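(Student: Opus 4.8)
The plan is to read off both assertions directly from the monotonicity, uniform bound, and scaling identity of the approximating family $\{M^K\}_{K>0}$ established in the three preceding lemmas; no new analytic input is needed here.

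First I would fix an arbitrary point $(t,r)$ with $t\ge 0$, $r\ge 0$ and regard $K\mapsto M^K(t,r)$ as a function of the single real parameter $K>0$. By Lemma \ref{lem;monotonicity-approximation} this function is nondecreasing in $K$, and by Lemma \ref{lem;local-sol-property} it is bounded above by $\ep 2\sg_d r^{d-2}$ uniformly in $K$. A bounded, nondecreasing function of $K$ has a finite limit as $K\to\infty$, equal to its supremum; hence the pointwise limit
\[
 M^*(t,r)=\lim_{K\to\infty}M^K(t,r)=\sup_{K>0}M^K(t,r)
\]
exists for every $(t,r)$ and inherits the bound $M^*(t,r)\le \ep 2\sg_d r^{d-2}$, which is exactly \eqref{eq;definition-M}. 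Note that it is the monotonicity, rather than any compactness argument, that delivers convergence of the \emph{whole} family and not merely of a subsequence.

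Next I would obtain the self-similar identity \eqref{eq;self-similar-prop} by passing to the limit in the scaling relation \eqref{MKl} of Lemma \ref{lem;self-similar-prop-approximation}. Fixing $\lam>0$ and $(t,r)$, I start from
\[
 \lam^{2-d}M^K(\lam^{2} t,\lam r)=M^{K\lam}(t,r).
\]
Letting $K\to\infty$, the left-hand side converges to $\lam^{2-d}M^*(\lam^{2} t,\lam r)$, since $(\lam^{2} t,\lam r)$ is a fixed point at which the pointwise limit has just been shown to exist. On the right-hand side, for fixed $\lam>0$ the index $K\lam$ also tends to $\infty$ as $K\to\infty$, so $M^{K\lam}(t,r)\to M^*(t,r)$. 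Equating the two limits gives $\lam^{2-d}M^*(\lam^{2} t,\lam r)=M^*(t,r)$, as claimed.

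There is no serious obstacle in this lemma: all the genuine difficulty has already been absorbed into the comparison principle of Theorem \ref{comp}, which is precisely what underlies the monotonicity in $K$ exploited above. The only two points demanding care are the elementary fact that a bounded monotone family converges to its supremum, and the relabeling $K\mapsto K\lam$ in the scaling identity, which commutes with the limit exactly because $K\lam\to\infty$ whenever $K\to\infty$ for fixed $\lam>0$.
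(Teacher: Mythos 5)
Your proposal is correct and follows essentially the same route as the paper: monotonicity in $K$ (from the comparison principle) plus the uniform bound $M^K\le \ep 2\sg_d r^{d-2}$ give pointwise convergence of the whole family to its supremum, and the self-similar identity follows by passing to the limit in the scaling relation $\lam^{2-d}M^K(\lam^2 t,\lam r)=M^{K\lam}(t,r)$, using that the reindexed family $\{M^{K\lam}\}_{K>0}$ has the same limit $M^*$. The paper phrases the last step as the observation that $\{M^{K\lam}\}_{K>0}$ coincides with $\{M^K\}_{K>0}$ as a family, which is the same point you make via $K\lam\to\infty$ for fixed $\lam>0$.
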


\begin{proof}
The relations in \eqref{eq;definition-M} result 
from Lemma \ref{lem;monotonicity-approximation} by which 
the sequence $\{ M^{K}(t,r)\}_{K>0}$ 
is monotonically increasing in $K>0$ and, from Lemma \ref{lem;local-sol-property}, where we obtain \eq{\label{eq;upper-bound-approximation}
 0
 \le
 M^{K}(t,r)
 \le
\ep 2\sg_{d}r^{d-2}.
}%
Since the family of the solutions 
$\{ M^{K\lam}(t,r) \}_{K>0}$
coincides with the family  $\{ M^{K}(t,r)\}_{K>0}$, 
we observe 
$ 
 \lim_{K \to \infty}
 M^{K\lam}(t,r)
 =
 M^*(t,r)
$
by relation \eqref{eq;definition-M}.
Thus, using Lemma \ref{lem;self-similar-prop-approximation}
we obtain
$$
 \lam^{2-d}M^*(\lam^{2}t,\lam r) 
 =
 \lim_{K \to \infty}
 \lam^{2-d}M^{K}(\lam^{2}t,\lam r)
 =
 \lim_{K \to \infty}
 M^{K\lam}(t,r)
 =
 M^*(t,r)
$$
for each $\lambda\ge 0$ and all $t\geq 0$, $r\geq 0$.
\end{proof}

In the next step, we prove suitable Schauder estimates for the family 
$\{ M^{\rd K}(t,r) \}_{K>0}$
in order to show that the limit function $M^*(t,r)$ is a classical solution of problem \eqref{eq;local-mass}. Here, we follow an approach proposed in 
\cite{BKLN1,BKLN2}.

\begin{theorem}\label{thm;prop-of-M-limit}
For every  $0<\tau<T$, $0<\delta<R$
 and
$\alpha\in(0,1)$, 
it holds that   
$M^{*} \in \mathcal{C}^{1+\frac{\alpha}{2},2+\alpha}_{t,r}([\tau,T]\times [\delta,R])$
and 
\eq{\label{eq;limit-M}
 \lim_{K \to \infty}
 \| M^{K} - M^{*}\|_{\mathcal{C}^{1+\frac{\alpha}{2},2+\alpha}_{t,r}([\tau,T]\times [\delta,R])}
 = 0.
}%
Moreover, the function 
$
 M^{*} 
 \in 
 \mathcal{C}_{t,r,\text{\rm loc}}^{1+\frac{\alpha}{2},2+\alpha}((0,\infty) \times (0,\infty))\cap C([0,\infty) \times [0,\infty))
$ 
is a classical solution to problem \eqref{eq;local-mass}
with the initial datum $M_{0}(r)=\ep 2\sg_{d}r^{d-2}$.
\end{theorem}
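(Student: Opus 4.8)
The plan is to regard the first equation in \eqref{eq;local-mass}, written for each approximation $M^K$, as a linear uniformly parabolic equation in the single spatial variable $r$ on compact rectangles away from $r=0$, and to run the standard parabolic regularity bootstrap with constants that are independent of $K$. Writing the equation in the non-divergence form
\[
M^K_t = M^K_{rr} + a^K(t,r)\,M^K_r,\qquad a^K(t,r):=\frac{1}{\sigma_d r^{d-1}}\,M^K(t,r)-\frac{d-1}{r},
\]
the crucial point is that the a~priori bound \eqref{eq;upper-bound-approximation}, namely $0\le M^K(t,r)\le 2\sigma_d\varepsilon\, r^{d-2}$, controls the drift \emph{without any use of} $M^K_r$: on any rectangle $[\tau,T]\times[\delta,R]\subset(0,\infty)\times(0,\infty)$ one has $|a^K(t,r)|\le (2\varepsilon+d-1)/\delta$, uniformly in $K$. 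Thus on such rectangles $M^K$ solves a uniformly parabolic equation whose principal part is the constant-coefficient operator $\partial_{rr}$ and whose drift is bounded in $L^\infty$ independently of $K$.

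First I would apply the interior Krylov--Safonov (equivalently De Giorgi--Nash--Moser) estimate to obtain a bound $\|M^K\|_{\mathcal{C}^{\beta/2,\beta}_{t,r}([\tau',T]\times[\delta',R'])}\le C$ on slightly smaller rectangles, with some $\beta\in(0,1)$ and $C$ independent of $K$. Once $M^K$ is uniformly H\"older, the coefficient $a^K$ is itself uniformly bounded in $\mathcal{C}^{\beta/2,\beta}$ (the factors $r^{1-d}$ and $r^{-1}$ being smooth for $r\ge\delta'$), so the interior parabolic Schauder estimate upgrades this to a uniform bound $\sup_{K>0}\|M^K\|_{\mathcal{C}^{1+\beta/2,\,2+\beta}_{t,r}([\tau'',T]\times[\delta'',R''])}<\infty$. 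Fixing any $\alpha\in(0,\beta)$, the compact embedding of parabolic H\"older spaces together with the Arzel\`a--Ascoli theorem yields a subsequence converging in $\mathcal{C}^{1+\alpha/2,2+\alpha}_{t,r}$ on $[\tau,T]\times[\delta,R]$; by Lemma~\ref{lem;limit} its limit must agree with the pointwise limit $M^{*}$ from \eqref{eq;definition-M}, and since every subsequence admits a further subsequence with the same limit, the full sequence converges, which is precisely \eqref{eq;limit-M}. Because all derivatives occurring in \eqref{eq;local-mass} converge uniformly on compacta, one may pass to the limit term by term and conclude that $M^{*}\in\mathcal{C}^{1+\alpha/2,2+\alpha}_{t,r,\mathrm{loc}}((0,\infty)\times(0,\infty))$ is a classical solution of the equation.

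It then remains to treat the boundary of the quadrant and to identify the initial datum. Continuity at $r=0$ is immediate from the sandwich $0\le M^{*}(t,r)\le 2\sigma_d\varepsilon\, r^{d-2}$, which forces $M^{*}(t,0)=0$. For the initial trace I would use the monotone sandwich supplied by Lemma~\ref{lem;monotonicity-approximation}: for every fixed $r>0$ the upper bound gives $\limsup_{t\to0^{+}}M^{*}(t,r)\le 2\sigma_d\varepsilon\, r^{d-2}=M_0(r)$, while $M^{*}(t,r)\ge M^{K}(t,r)$ together with the continuity of each $M^{K}$ up to $t=0$ (Proposition~\ref{prop;?}(1)) yields $\liminf_{t\to0^{+}}M^{*}(t,r)\ge M_0^{K}(r)$ for every $K$; letting $K\to\infty$ and using $M_0^{K}(r)\nearrow 2\sigma_d\varepsilon\, r^{d-2}$ furnishes the matching lower bound, so $M^{*}(t,r)\to M_0(r)$ as $t\to0^{+}$. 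Joint continuity of $M^{*}$ on $[0,\infty)\times[0,\infty)$ then follows from the monotone convergence of the continuous functions $M^{K}$ together with these two-sided bounds (Dini's theorem on compacta).

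The step I expect to be the main obstacle is securing the $K$-independence of the estimates. Everything hinges on the observation that \eqref{eq;upper-bound-approximation} controls the drift $a^K$ in $L^\infty$ on compact sets away from $r=0$ without requiring any gradient information, so that the Krylov--Safonov constant---and hence, after bootstrapping, the Schauder constant---depends only on $d$, $\varepsilon$, and the chosen rectangle, but not on $K$. Keeping these rectangles strictly inside $(0,\infty)\times(0,\infty)$ is essential, since the coefficients $r^{1-d}$ and $r^{-1}$ degenerate at $r=0$; for this reason the behaviour at $r=0$ and at $t=0$ is recovered separately, by the soft sandwich arguments above rather than by the interior regularity machinery.
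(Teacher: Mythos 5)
Your proof is correct and follows essentially the same route as the paper's: $K$-uniform interior Schauder estimates on compact rectangles away from $r=0$ (the paper simply cites the standard parabolic theory of Ladyzhenskaya--Solonnikov--Ural'tseva for this, while you derive them explicitly via Krylov--Safonov plus a Schauder bootstrap, using the bound \eqref{eq;upper-bound-approximation} to control the drift uniformly in $K$), followed by Arzel\`a--Ascoli, identification of every subsequential limit with the monotone pointwise limit $M^{*}$, and passage to the limit in the equation. Your two-sided sandwich argument for the boundary condition at $r=0$ and for the initial trace at $t=0$ (using $M^{K}(t,r)\le M^{*}(t,r)\le \ep 2\sg_{d}r^{d-2}$ and $M_{0}^{K}\nearrow M_{0}$) supplies a detail that the paper's proof leaves implicit, since interior convergence alone does not yield attainment of the initial datum.
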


\begin{proof} 
By the standard parabolic regularity argument (\cite{LSU}),
there exists a constant ${C}_{0}(\alpha,\tau,\delta,R,T)>0$,  independent of $K>0$, such that
\eq{\label{eq;schauder-est-M-approximation}
 \| M^{K} \|_{{\mathcal C}^{1+\frac{\alpha}{2},2+\alpha}_{t,r}([\tau,T]\times [\delta,R])}
 \le {C}_{0}(\alpha,\tau,\delta,R,T).
}%
Estimate \eqref{eq;schauder-est-M-approximation} combined with 
the Arzel\`{a}--Ascoli theorem and the definition of $M^{*}$ in 
 \eqref{eq;definition-M} imply that ${M^{*}} \in {\mathcal C}^{1+\frac{\alpha}{2},2+\alpha}_{t,r}([\tau,T]\times [\delta,R])$
 and 
\begin{equation}\label{eq;limit-M:H}
 \lim_{K \to \infty} 
 \| 
  M^{K} - M^{*} 
 \|_{{\mathcal C}^{1+\frac{\alpha}{2},2+\alpha}_{t,r}([\tau,T]\times [\delta,R])}
 = 0.
\end{equation} 
Since all functions  $M^{K}$ are  unique classical solutions to problem \eqref{eq;local-mass}
with the initial data $M_{0}^{K}$ and they converge  to $M^{*}$ as $K\to\infty$
in the sense of limit \eqref{eq;limit-M:H},
we obtain that the limit function $M^{*}$ is a classical solution to problem
\eqref{eq;local-mass} with the initial datum $M_{0}(r)=\ep 2 \sg_{d}r^{d-2}$.
\end{proof}

We conclude this section by a result on a self-similar asymptotics of some solutions to problem \eqref{eq;local-mass}.

\begin{corollary}\label{cor;self-similar-asymptotics} 
Assume that a continuous  and nondecreasing initial datum $M_{0}$ satisfies 
$$
 M^{K_{1}}_0(r)\le M_0(r)\le M^{K_{2}}_0(r)
 \qquad
 \text{for all }
 \quad
 r>0, 
$$
 some $K_{1}<K_{2}$, with   $M_0^{K_i}$ $(i\in \{1,2\})$ given by formulas
\rf{eq;truncated-initial-u} and 
 \rf{eq;integrated-density-approximation}. 
Then, the corresponding solution $M(t,r)$ 
of problem \eqref{eq;local-mass}
has  self-similar asymptotics, namely,
\eq{\label{eq;self-similar-asymptotics}
 \lim_{\lam \to \infty}
 \lam^{2-d} M(\lam^{2}t,\lam r) 
 = 
 M^{*}(t,r),
}%
where  $M^{*}=M^{*}(t,r)$ is the self-similar solution of problem \eqref{eq;local-mass}, and the convergence is uniform on compact subsets of $(0,\infty)\times (0,\infty)$.
\end{corollary}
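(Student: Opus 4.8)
The plan is to run a squeeze argument: bound the rescaled solution between two rescaled truncated solutions, use the exact scaling identity of Lemma~\ref{lem;self-similar-prop-approximation} to identify those bounds, and let the scaling parameter tend to infinity so that both bounds collapse onto the self-similar profile $M^*$ from Lemma~\ref{lem;limit}. First I would set up the sandwich at the level of solutions. Because $M_0(r)\le M_0^{K_2}(r)\le \ep 2\sg_d r^{d-2}$ with $\ep<1$, the datum is subcritical, so the corresponding solution $M$ of \rf{eq;local-mass} is global and shares the nonnegativity and regularity recorded in Proposition~\ref{prop;?}. Comparing the pairs $(M^{K_1},M)$ and $(M,M^{K_2})$ in Theorem~\ref{comp}, I note that hypotheses \rf{subcrit}--\rf{bc} are met: all three functions stay below the Chandrasekhar barrier and have nonnegative radial derivative, each is simultaneously a sub- and a supersolution, and the initial/boundary orderings hold by assumption and by vanishing at $r=0$. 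The gradient bound \rf{bdd} is supplied conveniently by the truncated partner, since $\frac{1}{\sg_d r^{d-1}}M^{K_i}_r=u^{K_i}$ is bounded; and the difference condition \rf{diff} follows from $|M-M^{K_i}|\le \ep 4\sg_d r^{d-2}$. Hence
\[
 M^{K_1}(t,r)\le M(t,r)\le M^{K_2}(t,r)\qquad\text{for all }t>0,\ r>0.
\]

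Next I would apply the rescaling to this chain. Replacing $(t,r)$ by $(\lam^2 t,\lam r)$, multiplying by $\lam^{2-d}$, and invoking the scaling identity \rf{MKl}, the two outer terms become $M^{K_1\lam}$ and $M^{K_2\lam}$, so that
\[
 M^{K_1\lam}(t,r)\le \lam^{2-d}M(\lam^2 t,\lam r)\le M^{K_2\lam}(t,r).
\]
As $\lam\to\infty$ we have $K_i\lam\to\infty$, and Lemma~\ref{lem;limit} shows that both outer terms converge pointwise to the same limit $M^*(t,r)$. The squeeze theorem then already yields the pointwise version of \rf{eq;self-similar-asymptotics}.

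To upgrade this to convergence uniform on compact subsets I would use Dini's theorem. For fixed $K_i>0$ the family $\lam\mapsto M^{K_i\lam}$ is nondecreasing in $\lam$ by the parameter monotonicity of Lemma~\ref{lem;monotonicity-approximation}, is built from continuous functions, and converges pointwise to the limit $M^*$, which is continuous on $(0,\infty)\times(0,\infty)$ by Theorem~\ref{thm;prop-of-M-limit}. Dini's theorem therefore gives uniform convergence of both outer terms on every rectangle $[\tau,T]\times[\delta,R]$, and from $M^{K_1\lam}-M^*\le \lam^{2-d}M(\lam^2 t,\lam r)-M^*\le M^{K_2\lam}-M^*$ the middle difference inherits this uniform smallness, which is exactly \rf{eq;self-similar-asymptotics}. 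The step I expect to require the most care is the clean verification that the given solution $M$ falls within the scope of the comparison principle --- in particular that it is a genuine classical sub- and supersolution satisfying the bounded difference \rf{diff} --- rather than any part of the scaling or squeeze bookkeeping, which is essentially forced once the sandwich is in place and $M^*$ is known to be continuous.
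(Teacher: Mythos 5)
Your proposal is correct and follows essentially the same route as the paper: the comparison principle of Theorem \ref{comp} to sandwich $M$ between $M^{K_1}$ and $M^{K_2}$, the scaling identity \eqref{MKl} to rewrite the rescaled bounds as $M^{K_1\lam}$ and $M^{K_2\lam}$, and passage to the limit $\lam\to\infty$ using the fact that both families coincide with $\{M^K\}_{K>0}$. The only divergence is the last step: where you invoke Dini's theorem (via the monotonicity of Lemma \ref{lem;monotonicity-approximation}) to upgrade pointwise to locally uniform convergence, the paper simply cites Theorem \ref{thm;prop-of-M-limit}, whose Schauder-type convergence \eqref{eq;limit-M} is already uniform (indeed in H\"{o}lder norm) on compact subsets, so your Dini detour, while valid, is unnecessary.
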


\begin{proof}
Applying the comparison principle from Theorem \ref{comp} 
we obtain
$$
 M^{K_{1}}(t,r) 
 \le 
 M(t,r) 
 \le
  M^{K_{2}}(t,r)
  \qquad
  \text{for all}
  \quad
  t>0,
  \;
  r>0.
$$ 
Therefore, for every  $\lam > 0$, we have
$$
 \lam^{2-d}M^{K_{1}}(\lam^{2} t,\lam r)
 \le 
 \lam^{2-d}M(\lam^{2}t,\lam r) 
 \le 
 \lam^{2-d}M^{K_{2}}(\lam^{2} t,\lam r).
$$
On the other hand, the scaling property from 
Lemma \ref{lem;self-similar-prop-approximation} implies 
$$
 \lam^{2-d}M^{K_{1}}(\lam^{2} t,\lam r)=M^{K_{1}\lam}(t,r)
 \quad
 \text{and}
 \quad
 \lam^{2-d}M^{K_{2}}(\lam^{2} t,\lam r)=M^{K_{2}\lam}(t,r).
$$
Since both  families
$\{ M^{K_{1}\lam}(t,r) \}_{\lam > 0}$ 
and 
$\{ M^{K_{2}\lam}(t,r) \}_{\lam > 0}$ 
coincide with 
$\{ M^{K}(t,r) \}_{K > 0}$,
we conclude by Theorem \ref{thm;prop-of-M-limit}
$$
 \lim_{\lam \to \infty} M^{K_{1}\lam}(t,r)
 =
 \lim_{\lam \to \infty} M^{K_{2}\lam}(t,r)
 =M^{*}(t,r)
$$
uniformly on compact subsets of $(0,\infty)\times (0,\infty)$. 
\end{proof}

\begin{remark}\label{rem:fin}
For the reader convenience, let us review results which 
we have already proved in this section in terms of solutions to system \eqref{eq;DD}. Applying  formula \eqref{u-M} to the self-similar solution in Theorem \ref{thm;prop-of-M-limit}, we obtain the function
\be 
u^*(t,x)=\frac{1}{\s_dr^{d-1}}M^*_r(t,r) \qquad \text{with} \quad  r=|x|>0.\label{u-M*}
\ee 
By the scaling property \eqref{eq;self-similar-prop}, we have got  the equality $\lambda^2 u^*(\lambda^2t,\lambda x)=u^*(t,x)$, hence (see equations \eqref{u:ul}--\eqref{eq;self-similar}), we obtain 
\be \label{u*}
u^*(t,x) =\frac{1}{t}U\left(\frac{x}{\sqrt{t}}\right).
\ee
By Theorem \ref{thm;prop-of-M-limit}, the self-similar profile $U(x)=u^*(1,x)$, together with its derivatives up to second order are H\"{o}lder continuous on 
$\re^d \setminus \{ 0 \}$ (by a standard parabolic regularity, this is in fact a smooth function on  $\re^d\setminus \{0\}$). In the next section, we prove that   
$u^*(t,x)$ given by formulas \eqref{u-M*}--\eqref{u*} 
is a self-similar solution of system \eqref{eq;DD} with the self-similar profile $U\in C^\infty (\R^d)$. 
\end{remark}

\section{Regularity of self-similar profile} \label{sec:reg}

In order to study the regularity of $M^{*}=M^*(t,r)$ as $r\to 0$, and then the regularity of the corresponding density $u^{*}(t,x)$ (see Remark \ref{rem:fin}), 
 we introduce the following  auxiliary linear initial-boundary value problem.

\begin{lemma}\label{lem;Bessel-semigroup}
The following initial-boundary value problem on the half-line
\eq{\label{eq;Bessel} 
 \spl{
  & m_t = m_{rr}-\frac{\lambda}{r}m_r,\quad & t>0, r > 0,\\
  & m(t,0)= 0,\quad &t>0,\\
  & m(0,r) = c_{0} r^{d-2} , \quad &r> 0,
 }
}%
with some $\lambda\in(d-3,d-1]$,
has  the unique solution of the following explicit form 
\eq{\label{eq;m-explicit-form}
 m(t,r)
 =
 \frac{2^{d-3-\lam}}{\Gamma\big( \frac{\lam-d+3}{2}\big)}
 c_{0}
 t^{-\frac{\lam-d+3}2}
 r^{\lam+1}
 {\rm e}^{-\frac{r^{2}}{4t}}
 \int_{0}^{1}
  s^{\frac{d}2-1}(1-s)^{\frac{\lam-d+3}2-1}{\rm e}^{\frac{r^{2}}{4t}s}
 \,
 \ds,}% 
 where $\Gamma=\Gamma(x)$ is the Euler Gamma function. 
Moreover, for each $\y \in (0,\infty)$, this solution satisfies 
\eq{\label{eq;mmm} 
 \sup_{t>0}
 \int_{0}^{\sqrt{t}\y}
  \frac{m(t,r)}{r^{d-1}}
 \,
 \dr
 <
 \infty.
} 
\end{lemma}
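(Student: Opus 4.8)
The plan is to remove the singular drift by the substitution $m(t,r)=r^{\lambda+1}n(t,r)$. A direct computation shows that this change of variables produces an $r^{-2}$-term with coefficient $(\lambda+1)\big((\lambda+1)-1-\lambda\big)=0$, so it cancels and $n$ solves the radial heat equation
\[
 n_t=n_{rr}+\frac{\lambda+2}{r}\,n_r,
\]
that is, the $D$-dimensional radial heat operator with the (generally non-integer) dimension $D=\lambda+3$. The boundary condition $m(t,0)=0$ then becomes automatic, since $\lambda+1>0$ while $n(t,\cdot)$ is bounded near $r=0$ for $t>0$, and the datum transforms into the homogeneous profile $n(0,r)=c_0\,r^{-\gamma}$ with $\gamma\equiv\lambda-d+3$. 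The hypothesis $\lambda\in(d-3,d-1]$ reads exactly $\gamma\in(0,2]$; since $D=\lambda+3=d+\gamma>d$ one has $0<\gamma<D$, so $r^{-\gamma}$ is locally integrable in dimension $D$ and the heat evolution is well posed.

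Next I would solve the reduced problem by the self-similar ansatz $n(t,r)=t^{-\gamma/2}\Theta(r/\sqrt t)$, forced by the common scale-covariance of the equation and the homogeneous datum. The profile $\Theta$ obeys a confluent (Kummer) ordinary differential equation whose unique solution bounded at the origin is the confluent hypergeometric function ${}_1F_1(\gamma/2;D/2;-\eta^2/4)$. Expressing it through its Euler integral and applying Kummer's transformation $e^{-z}\,{}_1F_1(a;c;z)={}_1F_1(c-a;c;-z)$ with $a=\tfrac d2$, $c=\tfrac D2$, $c-a=\tfrac\gamma2$ reproduces exactly the factor $e^{-r^2/4t}\int_0^1 s^{d/2-1}(1-s)^{(\lambda-d+3)/2-1}e^{r^2s/4t}\,\mathrm{d}s$ appearing in the statement. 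The multiplicative constant is fixed by letting $t\to0^+$ with $r>0$ held fixed: the large-argument asymptotics ${}_1F_1(a;c;-z)\sim\frac{\Gamma(c)}{\Gamma(c-a)}z^{-a}$ give $n(t,r)\to c_0\,r^{-\gamma}$, which forces the prefactor $\tfrac{2^{d-3-\lambda}}{\Gamma((\lambda-d+3)/2)}c_0$; alternatively the whole formula can be checked by differentiating under the integral sign. For uniqueness I would note that, after the substitution, the reduced Cauchy problem has datum in $L^1_{\rm loc}$ and admits a unique solution in the Tychonoff class of growth at most $e^{ar^2}$; the polynomial bound $m\le c\,r^{d-2}$ keeps us inside this class, so the explicit formula is the solution. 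One may instead argue directly by a maximum principle on $m$, as in Theorem~\ref{comp}.

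For the bound \eqref{eq;mmm} I would use self-similarity: writing $m(t,r)=t^{(d-2)/2}\phi(r/\sqrt t)$ and substituting $r=\sqrt t\,\eta$, the powers of $t$ cancel and
\[
 \int_0^{\sqrt t\,\y}\frac{m(t,r)}{r^{d-1}}\,\dr=\int_0^{\y}\frac{\phi(\eta)}{\eta^{d-1}}\,\mathrm{d}\eta
\]
independently of $t$, so the supremum reduces to finiteness of this single integral. Near $\eta=0$ the explicit formula gives $m(t,r)\sim C\,r^{\lambda+1}$, because the Gaussian factor tends to $1$ and the integral tends to the Beta value $B(\tfrac d2,\tfrac\gamma2)$; hence the integrand behaves like $\eta^{\lambda-d+2}$, integrable at $0$ precisely when $\lambda-d+2>-1$, that is $\lambda>d-3$ --- again the standing hypothesis. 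Continuity of the integrand on $(0,\y]$ takes care of the remaining interval.

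The step I expect to be delicate is not the reduction but the two end effects: fixing the multiplicative constant, which hinges on the correct $t\to0^+$ asymptotics of ${}_1F_1$ (equivalently the incomplete-Gamma behaviour of the integral), and justifying uniqueness in the presence of the singular coefficient $\lambda/r$ and the unbounded homogeneous datum. Both are resolved cleanly by working with the reduced heat equation and its integral representation rather than with the Bessel operator directly; once the formula is in hand, the finiteness in \eqref{eq;mmm} is essentially automatic and turns only on the borderline exponent $\lambda>d-3$.
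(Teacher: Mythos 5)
Your proposal is correct, but it takes a genuinely different route from the paper's. The paper does not reduce to a heat equation at all: it starts from the known explicit kernel of the Bessel semigroup, $p(t;r,s)=\frac{1}{2t}e^{-(r^2+s^2)/(4t)}(rs)^{(\lambda+1)/2}I_{(\lambda+1)/2}\bigl(\frac{rs}{2t}\bigr)$ (citing Borodin--Salminen and Nowak--Sj\"ogren--Szarek), writes $m(t,r)=\int_0^\infty p(t;r,s)\,c_0 s^{d-2}\,s^{-\lambda}\,ds$, and evaluates this integral by the Nowak--Stempak identity expressing $\int_0^\infty s^{\beta-1}e^{-ps^2}I_\nu(qs)\,ds$ through ${}_1F_1$, then inserts the Euler integral representation of ${}_1F_1$ to obtain \eqref{eq;m-explicit-form}; uniqueness is simply inherited from that literature. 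You instead gauge away the drift by $m=r^{\lambda+1}n$ (your cancellation of the $r^{-2}$ term is correct), reduce to the radial heat operator in fractional dimension $D=\lambda+3$ with homogeneous datum $c_0r^{-\gamma}$, $\gamma=\lambda-d+3\in(0,2]$, solve by the self-similar ansatz and Kummer's ODE, and recover the stated formula via Kummer's transformation plus the same Euler integral; I verified that the $t\to0^+$ asymptotics ${}_1F_1(a;c;-z)\sim\frac{\Gamma(c)}{\Gamma(c-a)}z^{-a}$ fix the constant to exactly $2^{d-3-\lambda}/\Gamma\bigl(\frac{\lambda-d+3}{2}\bigr)$, as claimed. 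For \eqref{eq;mmm} the paper argues directly under the integral sign, using $e^{r^2s/(4t)}\le e^{r^2/(4t)}$ so the Gaussian factors cancel and the bound reduces to $B\bigl(\frac d2,\frac\gamma2\bigr)t^{-\gamma/2}\int_0^{\sqrt t\,y_*}r^{\lambda-d+2}\,dr$, which is $t$-independent; your scale-invariance reduction to the single integral $\int_0^{y_*}\phi(\eta)\eta^{1-d}\,d\eta$ is equivalent (the paper records this invariance only afterwards, in a remark). Your route buys self-containedness --- no appeal to the Bessel-process kernel or the special integral identity --- and makes the threshold $\lambda>d-3$ transparent at both ends. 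Its only soft spot is uniqueness: for non-integer $D$ the reduced equation is still a singular Bessel-type operator, not literally the heat equation on $\re^D$, so the Tychonoff-class argument needs the corresponding statement for Bessel operators or a maximum-principle argument as in Theorem \ref{comp}; but the paper is no more detailed on this point, so this is not a gap relative to it.
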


\begin{proof}
By,   e.g.,   either  \cite[App. 1, 21. Bessel processes, p. 138]{BS} or \cite{NSZ},
the solution to problem \eqref{eq;Bessel} is explicitly given by the formula 
\eq{\label{eq;msol}
 m(t,r)
 =
 \int_0^\infty 
  p(t;r,s)m_0(s)s^{-\lambda}
 \ds
}%
with $m_{0}(s)=c_{0}s^{d-2}$ and 
with the kernel 
\eq{\label{eq;Bkernel}
p(t;r,s)=\frac{1}{2t}{\rm e}^{-\frac{r^2+s^2}{4t}}(rs)^\frac{\lambda+1}{2} I_\frac{\lambda+1}{2}\left(\frac{r}{2t}s\right). 
}
Here,  the function $I_{\nu}=I_{\nu}(x)$ is the modified Bessel function of the first kind 
$$
 I_\nu(x)
 =
 \sum_{m=0}^{\infty}
  \frac{1}{m! \Gamma(m+\nu+1)} 
  \left(
   \frac{x}2
  \right)^{2m+\nu}.
$$  
Integral \rf{eq;msol} which defines the solution $m=m(t,r)$ of 
problem \eqref{eq;Bessel} with $m_0(s)=c_{0}s^{d-2}$ can be expressed in terms of the confluent hypergeometric function ${}_{1}F_{1}$, 
see \cite[Lemma 2.2]{NS}. 
Indeed, by \cite[Lemma 2.2]{NS}, 
the identity 
\eq{\label{eq;NS-Prudnikov}
\int_0^\infty s^{\beta-1}{\rm e}^{-ps^2}I_\nu\left(qs\right)\ds=\frac{q^\nu}{2^{\nu+1}p^\frac{\beta+\nu}{2}}\frac{\Gamma\left(\frac{\beta+\nu}{2}\right)}{\Gamma(\nu+1)}\,{}_1F_1\left(\frac{\beta+\nu}{2};\nu+1;\frac{q^2}{4p}\right)
}%
holds for $p$, $q>0$, $\beta,\, \nu\in\R$, $\beta+\nu>0$, $\nu \ne -1,\, -2,\, \cdots$,
where
$$
 {}_{1}F_{1}
  \left(
   a ; b ; z
  \right)
 =
 \frac{\Gamma(b)}{\Gamma(a)\Gamma(b-a)}
 \int_{0}^{1}
  s^{a-1}(1-s)^{b-a-1}{\rm e}^{zs}
 \,
 \ds
$$
with $0 < a < b$.
Thus, when $m_{0}(s)=c_{0}s^{d-2}$, we observe
\eqn{
 \spl{
  \int_0^\infty 
   p(t;r,s)m_0(s)s^{-\lambda}
  \ds
  =&
  \frac{c_{0}}2
  t^{-1}
  r^{\frac{\lam+1}2}
  {\rm e}^{-\frac{r^{2}}{4t}}
  \int_{0}^{\infty}
   s^{d-\frac{\lam+1}{2}-1}
   {\rm e}^{-\frac1{4t}s^{2}}
   I_{\frac{\lam+1}2}
   \left(
     \frac{r}{2t}
    s
   \right)
  \,
  \ds\\
  =&
  \frac{c_{0}}2
  t^{-1}
  r^{\frac{\lam+1}2}
  {\rm e}^{-\frac{r^{2}}{4t}} \,
  \frac{( \frac{r}{2t} )^{\frac{\lam+1}2}}{2^{\frac{\lam+3}2}(\frac1{4t})^{\frac{d}2}}
  \frac{\Gamma(\frac{d}2)}{\Gamma(\frac{\lam+3}2)} \, 
  {}_{1}F_{1}
   \left(
    \frac{d}{2};\frac{\lam+3}{2};\frac{r^2}{4t}
   \right)\\
  =&
  \frac{2^{d-3-\lam}}{\Gamma \left( \frac{\lam -d+3}{2} \right)}
  c_{0}
  t^{-\frac{\lam-d+3}{2}}
  r^{\lam + 1}
  {\rm e}^{-\frac{r^{2}}{4t}}
  \int_{0}^{1}
   s^{\frac{d}2-1}(1-s)^{\frac{\lam-d+3}2-1}{\rm e}^{\frac{r^{2}}{4t}s}
  \,
  \ds,
 }
}%
which implies our desired conclusion \eqref{eq;m-explicit-form}.
We next prove uniform bound \eqref{eq;mmm}.
Since $\lam-d+3 > 0$
and ${\rm e}^{\frac{r^{2}}{4t}s} \le {\rm e}^{\frac{r^{2}}{4t}}$
for $0 \le s \le 1$,
we observe 
\eqn{
 \spl{
  \int_{0}^{\sqrt{t}\y}
   \frac{m(t,r)}{r^{d-1}}
  \, 
  \dr
 =&
  Cc_{0}
  t^{-\frac{\lam-d+3}{2}}
  \int_{0}^{\sqrt{t}\y}
  r^{\lam-d+2}
  {\rm e}^{-\frac{r^{2}}{4t}}
  \left(
  \int_{0}^{1}
   s^{\frac{d}2-1}(1-s)^{\frac{\lam-d+3}2-1}{\rm e}^{\frac{r^{2}}{4t}s}
  \,
  \ds
  \right)
  \dr\\
 \le &
  Cc_{0}
  t^{-\frac{\lam-d+3}{2}}
  \int_{0}^{\sqrt{t}\y}
  r^{\lam -d+2}
  \left(
  \int_{0}^{1}
   s^{\frac{d}2-1}
   (1-s)^{\frac{\lam-d+3}2-1}
  \,
  \ds
  \right)
  \dr\\
 = &
  Cc_{0}
  B
  \left(
  \frac{d}{2},\frac{\lam-d+3}2
  \right)
  t^{-\frac{\lam-d+3}{2}}
  \int_{0}^{\sqrt{t}\y}
  r^{\lam -d+2}
  \dr\\
 = &
  \frac{Cc_{0}}{\lam -d+3}
  B
  \left(
  \frac{d}{2},\frac{\lam-d+3}2
  \right)\y^{\lam-d+3}
  <\infty,
 }
}
where $\y \in (0,\infty)$, $C=C(\lam,d)>0$ and $B=B(x,y)$ is the Euler Beta function. 
\end{proof} 

\begin{remark}\label{rem;self-similarity-of-m}
 Thanks to the self-similar property of the solution 
 $m=m(t,r)$ to linear problem   \eqref{eq;Bessel},
 the quantity 
 $$
  g_{\y}(t)
  \equiv
   \int_{0}^{\sqrt{t}y_{*}} 
    \frac{m(t,r)}{r^{d-1}} \, 
   \dr
 $$
 is finite, and does not depend on $t>0$ when $m_{0}(r)=c_{0}r^{d-2}$.
 Indeed,
 for each $\mu >0$,
 we observe
\eqn{
 \spl{
  g_{\y}(\mu^{2}t)
  =
  \int_{0}^{\sqrt{t}\mu\y}
   \frac{m(\mu^{2}t,r)}{r^{d-1}} \, 
  \dr
  =
  \int_{0}^{\sqrt{t}\y}
   \frac{\mu^{2-d}m(\mu^{2}t,\mu r)}{r^{d-1}} \, 
  \dr
  =
  \int_{0}^{\sqrt{t}\y}
   \frac{m(t,r)}{r^{d-1}} \, 
  \dr
  =g_{\y}(t)
 }
}
for every $\mu>0$ and $t > 0$, 
since $\mu^{2-d}m(\mu^{2}t,\mu r)=m(t,r)$ holds for every $\mu>0$, 
$t \ge 0$ and $r \ge 0$.
It is easy to check the self-similar property of the function $m$ from
the explicit form \eqref{eq;m-explicit-form}.
We also mention the problem of the critical case $\ep = 1$. 
Since $\lam=d-1-2\ep$, we observe 
$
 \lam + 2 - d 
 =
 -1
$,
and thus 
$
r^{\lam + 2 - d}=r^{-1}
$ 
is not integrable near $r=0$.
Therefore the uniform bound \eqref{eq;mmm} does not hold.
\end{remark}

The solution $m=m(t,r)$ to problem \eqref{eq;Bessel} plays the role of a barrier function of 
our original solution $M=M(t,r)$ to problem \eqref{eq;local-mass}.

\begin{lemma}\label{lem;super-sol-msol}
Let $M=M(t,r)$ be a solution to the initial-boundary value problem \eqref{eq;local-mass} with $M_{r}(t,r) \ge 0$ and $M(t,r) \le \ep 2\sg_{d}r^{d-2}$ for
$t\ge 0, r \ge 0$ and $\ep \in (0,1)$.
Assume that the function $\overline{m}=\overline{m}(t,r)$ 
is the solution to linear problem \eqref{eq;Bessel}
with $c_0=\eps 2\s_d$ and 
$\lam=d-1-2\ep$. 
Moreover, let 
$\underline{m}=\underline{m}(t,r)$ be also the solution to problem \eqref{eq;Bessel}
with $c_0=\eps 2\s_d$ and $\lam=d-1$.
Then,  $\underline{m}(t,r) \le M(t,r) \le \overline{m}(t,r)$ holds for $t>0$ and $r \ge 0$.
%}
\end{lemma}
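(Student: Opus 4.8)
Proof proposal for Lemma 4.9 (the statement $\underline m \le M \le \overline m$).

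The plan is to realize both inequalities as instances of the comparison principle from Theorem \ref{comp}, treating $M$ as the intermediate object and the two Bessel solutions $\overline m$, $\underline m$ as a supersolution and a subsolution respectively. The key observation is that the linear Bessel equation $m_t = m_{rr} - \frac{\lambda}{r}m_r$ differs from the nonlinear equation in \eqref{eq;local-mass} only by the quadratic term $\frac{1}{\s_d r^{d-1}}MM_r$, so the strategy is to choose $\lambda$ in each case so that the drift coefficient $\frac{\lambda}{r}$ absorbs the missing nonlinear term with the correct sign.

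First I would treat the upper bound $M \le \overline m$. Here $\overline m$ solves the Bessel problem with $\lambda = d-1-2\ep$. I would verify that $\overline m$ is a supersolution of \eqref{eq;local-mass} in the sense of \eqref{sup}. Writing the nonlinear operator applied to $\overline m$, one has
\[
\overline m_{rr} - \frac{d-1}{r}\overline m_r + \frac{1}{\s_d r^{d-1}}\overline m\,\overline m_r
= \overline m_t + \frac{\lambda-(d-1)}{r}\overline m_r + \frac{1}{\s_d r^{d-1}}\overline m\,\overline m_r,
\]
using $\overline m_t = \overline m_{rr} - \frac{\lambda}{r}\overline m_r$. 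With $\lambda - (d-1) = -2\ep$ and monotonicity $\overline m_r \ge 0$, the term $\frac{-2\ep}{r}\overline m_r$ is nonpositive, and I would show it dominates the nonlinear contribution using the size bound $\overline m \le \ep 2\s_d r^{d-2}$, which forces $\frac{1}{\s_d r^{d-1}}\overline m\,\overline m_r \le \frac{2\ep}{r}\overline m_r$; hence the right-hand side is $\le \overline m_t$, giving \eqref{sup}. One must still check that $\overline m$ itself satisfies the subcriticality bound \eqref{subcrit}, the monotonicity \eqref{mono}, the gradient bound \eqref{bdd}, the matching of initial and boundary data \eqref{ic}–\eqref{bc}, and the uniform-difference condition \eqref{diff}; the initial data agree since both carry $c_0 = \ep 2\s_d$ and $M_0(r) = \ep 2\s_d r^{d-2}$, and the boundary value at $r=0$ vanishes for both. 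Applying Theorem \ref{comp} with $(\underline M,\overline M) = (M,\overline m)$ yields $M \le \overline m$.

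For the lower bound $\underline m \le M$, I would run the symmetric argument with $\underline m$ (the choice $\lambda = d-1$) as a subsolution. Now $\lambda - (d-1) = 0$, so the extra drift term vanishes and the nonlinear term $\frac{1}{\s_d r^{d-1}}\underline m\,\underline m_r$, being nonnegative, makes the nonlinear operator applied to $\underline m$ at least as large as $\underline m_t$; this is exactly \eqref{sub} for $\underline M = \underline m$. Then Theorem \ref{comp} with $(\underline M,\overline M) = (\underline m, M)$ gives $\underline m \le M$. The main obstacle I anticipate is not the sign bookkeeping but verifying the structural hypotheses of Theorem \ref{comp} for the explicit Bessel solution — in particular establishing the subcriticality bound $\underline m, \overline m \le \ep 2\s_d r^{d-2}$ and the gradient bound \eqref{bdd} directly from the explicit formula \eqref{eq;m-explicit-form}, together with confirming the uniform control \eqref{diff}; the latter should follow because both $M$ and $m$ obey the same $r^{d-2}$ growth, so their difference is controlled in the $r^{2-d}$-weighted norm. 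The crucial quantitative input making the whole scheme work is precisely the constraint $\ep < 1$, which keeps $\lambda = d-1-2\ep$ in the admissible range $(d-3, d-1]$ required by Lemma \ref{lem;Bessel-semigroup} and guarantees integrability of the barrier near the origin.
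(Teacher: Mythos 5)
Your algebraic observation is sound --- the bracketing of the effective drift $\tfrac{d-1}{r}-\tfrac{M}{\sigma_d r^{d-1}}$ between $\tfrac{d-1-2\eps}{r}$ and $\tfrac{d-1}{r}$ is exactly the mechanism the paper exploits --- but your way of deploying it has a genuine gap: Theorem \ref{comp} cannot be applied to the pairs $(M,\overline m)$ and $(\underline m,M)$, because hypothesis \eqref{bdd} fails for \emph{every} function involved. That hypothesis requires at least one member of the pair to have $\sup_{t>0,\,r>0}\tfrac{1}{\sigma_d r^{d-1}}(\cdot)_r<\infty$, i.e.\ a uniformly bounded associated density. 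The Bessel barriers are self-similar, $\overline m(t,r)=t^{(d-2)/2}\phi(r/\sqrt t)$, so their densities scale like $\tfrac1t\,\tfrac{\phi'(y)}{\sigma_d y^{d-1}}$ and blow up as $t\searrow 0$; worse, for $\lambda=d-1-2\eps$ formula \eqref{eq;m-explicit-form} gives $\overline m\sim C\,t^{-(1-\eps)}r^{d-2\eps}$ near $r=0$, so $\tfrac{\overline m_r}{\sigma_d r^{d-1}}\sim C\,t^{-(1-\eps)}r^{-2\eps}$ is unbounded even in $r$ at fixed $t$. The solution $M$ to which the lemma is actually applied (the self-similar $M^{*}$ of Theorem \ref{thm;prop-of-M-limit}) has density $u^{*}=\tfrac1t U(x/\sqrt t)$, equally unbounded --- and boundedness of $U$ is precisely the conclusion of Lemma \ref{lem;profile-boundedness} that this lemma is needed for, so assuming any density bound on $M$ would be circular. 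Restricting to $[\tau,T]$ to restore \eqref{bdd} is circular as well: Theorem \ref{comp} would then require the ordering at $t=\tau$, which is what you are trying to prove. So the ``gradient bound'' you list as a verifiable structural hypothesis is in fact false, and your scheme breaks at that point.

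The paper's proof avoids this by reversing the roles. Instead of showing that the Bessel solutions are super/subsolutions of the nonlinear problem \eqref{eq;local-mass}, it shows that $M$ itself is a \emph{subsolution of the linear problem} \eqref{eq;Bessel} with $\lambda=d-1-2\eps$ (using $M\le\eps 2\sigma_d r^{d-2}$ and $M_r\ge 0$ to absorb the quadratic term into the drift) and a \emph{supersolution of the linear problem} with $\lambda=d-1$ (simply dropping the nonnegative term $\tfrac{1}{\sigma_d r^{d-1}}MM_r$), and then invokes the comparison principle for the linear Bessel equation. This matters because the constant $b$ in \eqref{bdd} enters the proof of Theorem \ref{comp} only to control the term $\tfrac{\lM_r}{\sigma_d r^{d-1}}(\lM-\uM)$ produced by the quadratic nonlinearity; for a linear equation that term is absent and no such bound is needed. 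If you insist on your ``barriers as super/subsolutions'' viewpoint, you would have to run Theorem \ref{comp} against the approximations $M^{K}$ (whose densities are bounded, so \eqref{bdd} holds for them) and pass to the limit $K\to\infty$; even then the lower bound needs extra care, since $\underline m(0,r)=\eps 2\sigma_d r^{d-2}\ge M_0^{K}(r)$ near the origin, i.e.\ the initial ordering \eqref{ic} goes the wrong way. You would also still owe proofs of $\overline m_r\ge 0$, $\underline m_r\ge 0$ and $\overline m\le \eps 2\sigma_d r^{d-2}$, which your proposal defers to the explicit formula.
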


\begin{proof}
Since the inequality 
$ 
 M(t,r)
 \le 
  \eps 2\sg_dr^{d-2}
$
holds for those solutions and $M_r(t,r)\ge 0$, 
equation \rf{eq;local-mass} leads to the inequality 
\eqn{ 
 M_t
 \le 
 M_{rr}-\frac{d-1}{r}M_r+\frac{1}{\sg_dr^{d-1}}\ep 2\sg_{d}r^{d-2}M_r 
 =
 M_{rr}-\frac{d-1-2\eps}{r}M_r.
}
Therefore, the solution $M$ to problem \eqref{eq;local-mass} 
corresponding the initial datum $M_{0}(r)=c_{0}r^{d-2}$
with $M_{r}(t,r) \ge 0$ and $M(t,r) \le \ep 2\sg_{d}r^{d-2}$
is a subsolution to linear problem \eqref{eq;Bessel}. 
Hence, by the comparison principle, 
we conclude that 
$M(t,r) \le \overline{m}(t,r)$ for
$t \ge 0$ and $r \ge 0$. 

We next show that the function $M$ is a supersolution to linear problem \eqref{eq;Bessel}.
According to the conditions $M \ge 0$ and $M_{r} \ge 0$,
we observe 
\eqn{ 
 M_t
 =
 M_{rr}-\frac{d-1}{r}M_r
 +
 \frac{1}{\sg_{d}r^{d-1}}MM_{r}
 \ge 
 M_{rr}-\frac{d-1}{r}M_r.
}
Thus the function $M$ is a supersolution to problem \eqref{eq;Bessel}.
Again by the comparison principle for linear problem \eqref{eq;Bessel},
we have $\underline{m}(t,r) \le M(t,r)$ for $t \ge 0$ and $r \ge 0$.
\end{proof}

Finally, we prove that the density $u^{*}$   corresponding to the self-similar solution $M^{*}$ of problem \eqref{eq;local-mass}
is of the form $u^{*}(t,x)=\frac{1}{\sg_{d}|x|^{d-1}}M^{*}_{r}(t,|x|) =\frac{1}{t}U\big(\frac{x}{\sqrt{t}}\big)$ with some bounded function $U$: $\|U\|_\infty<\infty$. 

\begin{lemma}\label{lem;profile-boundedness}
Let $\ep \in (0,1)$,
$M^{*}=M^{*}(t,r)$ be a solution to problem \eqref{eq;local-mass}
constructed in Theorem \ref{thm;prop-of-M-limit} with $M_{0}(r)=\ep 2\sg_{d}r^{d-2}$
and 
$u^{*}=u^{*}(t,x)$ be a function defined by 
$u^{*}(t,x)=\frac{1}{\sg_{d}|x|^{d-1}}M^{*}_{r}(t,|x|)$. 
Then,  
$u^{*}$ is of the form $u^{*}=\frac{1}{t}U\big(\frac{x}{\sqrt{t}}\big)$ with some $U \in L^{\infty}(0,\infty)$. 
\end{lemma}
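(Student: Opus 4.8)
The plan is to split the statement into the self-similar structure of $u^*$, which is immediate from the scaling of $M^*$, and the boundedness of $U$, where the only difficulty sits at the origin.

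First I would read off the self-similar form from the scaling identity \eqref{eq;self-similar-prop} for $M^*$. Differentiating $\lam^{2-d}M^*(\lam^2t,\lam r)=M^*(t,r)$ in $r$ gives $\lam^{3-d}M^*_r(\lam^2t,\lam r)=M^*_r(t,r)$, whence, with $u^*(t,x)=\frac{1}{\sg_d r^{d-1}}M^*_r(t,r)$ and $r=|x|$,
\eqn{
\lam^2u^*(\lam^2t,\lam x)=\frac{\lam^{3-d}}{\sg_d r^{d-1}}M^*_r(\lam^2t,\lam r)=u^*(t,x)\qquad(\lam>0).
}
Taking $\lam=1/\sqrt t$ yields $u^*(t,x)=\frac1tU(x/\sqrt t)$ with $U(x)=u^*(1,x)$. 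Since $u^*$ is smooth on $\re^d\setminus\{0\}$ by Remark \ref{rem:fin}, the function $U$ is continuous on $(0,\infty)$ and bounded on every set $\{r\ge\delta\}$, so it remains only to control $U(r)$ as $r\to0$.

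Next I would derive a first-order profile ODE. Writing $\Phi(r)=M^*(1,r)$, the reduction $M^*(t,r)=t^{(d-2)/2}\Phi(r/\sqrt t)$ inserted into the mass equation in \eqref{eq;local-mass} produces a second-order ODE for $\Phi$; rewriting it in terms of the profile density $U(r)=\frac{\Phi'(r)}{\sg_d r^{d-1}}$ and the nonnegative drift velocity $c(r)=\frac{M^*(1,r)}{\sg_d r^{d-1}}\ge0$ I expect the clean form
\eq{
U'(r)+\Big(\frac r2+c(r)\Big)U(r)=\frac{d-2}{2}\,c(r),\qquad r>0,\label{plan:ode}
}
all terms being smooth for $r>0$ thanks to the local regularity of $M^*$ from Theorem \ref{thm;prop-of-M-limit}.

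Finally I would extract boundedness from \eqref{plan:ode}. On $[1,\infty)$ the observation that $U'(r)<0$ whenever $U(r)>\frac{d-2}{2}$ (the right side of \eqref{plan:ode} being negative there) gives, by a first-crossing argument, $U(r)\le\max\{U(1),\frac{d-2}{2}\}$. On $(0,1]$ I would multiply by the integrating factor $\mu(r)=\exp\big(\int_1^r(\frac s2+c(s))\ds\big)$, so that $(\mu U)'=\frac{d-2}{2}\mu c\ge0$ and $\mu U$ is nondecreasing; this gives
\eqn{
U(r)\le U(1)\exp\Big(\tfrac14+\int_0^1c(s)\,\ds\Big),\qquad 0<r\le1.
}
Everything then reduces to the integrability of the drift at the origin, and this is exactly where Lemma \ref{lem;super-sol-msol} and estimate \eqref{eq;mmm} enter: from $M^*(1,r)\le\overline{m}(1,r)$,
\eqn{
\int_0^1c(s)\,\ds=\frac{1}{\sg_d}\int_0^1\frac{M^*(1,s)}{s^{d-1}}\,\ds\le\frac{1}{\sg_d}\int_0^1\frac{\overline{m}(1,s)}{s^{d-1}}\,\ds<\infty,
}
the last integral being finite by \eqref{eq;mmm} applied to $\overline{m}$, whose parameter $\lam=d-1-2\ep$ lies in $(d-3,d-1]$ exactly because $\ep\in(0,1)$. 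Combining the two regimes gives $U\in L^\infty(0,\infty)$.

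I expect the main obstacle to be the bookkeeping in deriving \eqref{plan:ode} with the correct signs and justifying it under the available $C^{2+\alpha}_{\mathrm{loc}}$ regularity of $\Phi$, together with the realization that the naive a priori bound $M^*(1,r)\le2\ep\sg_d r^{d-2}$ only yields $c(r)\lesssim r^{-1}$, which is \emph{not} integrable at $0$; the improvement to an integrable drift near the origin genuinely requires the sharper barrier $\overline{m}\sim r^{d-2\ep}$, and hence the condition $\ep<1$.
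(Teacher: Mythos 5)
Your proposal is correct and follows essentially the same route as the paper's proof: the first-order profile equation you write as \eqref{plan:ode} is exactly \eqref{u-sss}, your integrating factor $\mu$ coincides (up to a constant) with the paper's $f$ in \eqref{eq;integral-factor-f}, and the decisive step --- integrability of the drift $\M(y)/(\sg_{d}y^{d-1})$ near the origin via the barrier $\overline{m}$ from Lemma \ref{lem;super-sol-msol} and estimate \eqref{eq;mmm}, which is precisely where $\ep<1$ enters --- is identical. The only (minor, and perfectly valid) divergence is for large $y$, where you get boundedness by a first-crossing sign argument, whereas the paper computes the stronger fact $\lim_{y\to\infty}U(y)=0$ via l'Hospital's rule applied to \eqref{eq;U-explicit-form}.
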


\begin{proof}
The function $M^{*}$, by property \rf{eq;self-similar-prop}, has the form 
\be \label{eq;M-sss} 
  M^{*}(t,r)
  \equiv
  t^{\frac{d}2-1}\M \left(\frac{r}{\sqrt{t}}\right),
\ee 
On the other hand, 
$$
 M^{*}(t,r)
  =
  \int_{\{ |x|<r \}}
   u^{*}(t,x)
  \,
  \dx, 
$$ 
thus
\eq{\label{eq;M-sss-2}
 M^{*}(t,r)
 = 
 t^{\frac{d}2-1} 
 \int_{\{ |x| < \frac{r}{\sqrt{t}}\}}
  U(x)\,
 \dx
}
holds for a function $U=U(y)$ such that $
 u^{*}(t,x)
 =
 \frac1t 
 U\big( \frac{x}{\sqrt{t}}\big)
$.
   
The function $\M =\M(y)$ satisfies the equation 
\be
\M''+\frac{y}{2}\M'-\frac{d-2}{2}\M -\frac{d-1}{y}\M'+\frac{1}{\s_dy^{d-1}}\M \M'=0,\label{M-sss}
\ee 
with $\M(0)=0$ and the new variable $y=\frac{r}{\sqrt{t}}$, $'=\frac{\rm d}{{\rm d}y}$. 
Therefore, the function $U$ satisfies 
\eq{\label{u-sss}
U'(y)+\left(\frac{y}{2}+\frac{\M(y)}{\s_dy^{d-1}}\right)U(y)
=
\frac{d-2}{2}\frac{\M(y)}{\s_dy^{d-1}}.
}
According to Lemma \ref{lem;super-sol-msol} 
 and property \eqref{eq;mmm} in Lemma \ref{lem;Bessel-semigroup},
we get 
\eqn{ 
 \spl{
  \int_0^{\y}
   \frac{\M (y)}{y^{d-1}}
  \,
  \dy
  =
  \int_{0}^{y_{*}}
   \frac{M^{*}(1,y)}{y^{d-1}}
  \,
  \dy
  \le
  \int_{0}^{y_{*}}
   \frac{\overline{m}(1,y)}{y^{d-1}}
  \,
  \dy
%  \le
%  \int_{0}^{\sqrt{t}y_{*}}
%   \frac{\overline{m}(t,r)}{r^{d-1}}
%  \,
%  \dr
  \le
  C(\lam, d)
  \y^{\lam+3-d}
  <\infty
} } 
for some $y_{*} \in (0,\infty)$. 
Now suppose that $U(\y)$ is finite for some (in fact,  by Lemma \ref{lem;super-sol-msol}, for each) $\y \in (0,\infty)$. 
We may introduce the integrating factor
\eq{\label{eq;integral-factor-f}
 f(y)
 =
 \exp\left(\frac{y^2}{4}-\int_y^{\y} \frac{\M(s)}{\s_ds^{d-1}}\ds\right),
}
and  thus
we rewrite equation \rf{u-sss} in the form 
\be
(Uf)'=U'f+f'U=\frac{d-2}{2}\left(f'-\frac{y}{2}f\right). \label{U-int}
\ee
Hence, by property \rf{eq;mmm}, $0<f(0)<\infty$ holds together with $f(\y)={\rm e}^{\frac14 \y^2}$.
Integrating equation \rf{U-int} over $[y,\y]$, we conclude  
\eq{\label{eq;U-explicit-form}
 U(y)- \frac{d-2}2
 =
\frac{f(\y)}{f(y)}\left(U(\y)-\frac{d-2}2\right) 
 +\frac{d-2}{4f(y)}\int_y^{\y} sf(s) \, \ds,
}
which is finite and bounded as $y \searrow 0$ since $U(\y)<\infty$.
We claim that, for $\y>0$  and $y=0$,
\eq{\label{eq;limit-U(0)-rhs}
\frac{\mathrm{d}}{\dy_{*}}
\left[
\frac{f(\y)}{f(0)}\left(U(\y)-\frac{d-2}2\right) 
 +\frac{d-2}{4f(0)}\int_0^{\y} sf(s) \, \ds
\right]
 =0,
}
which shows that the right hand side of equality \eqref{eq;U-explicit-form} is independent of $\y>0$.
To see this, we substitute
the formula
$$
 \frac{\mathrm{d}}{\dy_{*}}
 \int_0^{\y} sf(s) \, \ds
 =
 \y f(\y) 
 -
 \frac{\mathcal{M}(\y)}{\sg_{d}\y^{d-1}}
 \int_{0}^{\y}
  sf(s)
 \, \ds
$$
and relation \eqref{u-sss} into the left hand side of \eqref{eq;limit-U(0)-rhs}.
Then we immediately obtain our claim \eqref{eq;limit-U(0)-rhs}.
Therefore $\lim_{y \searrow 0} U(y)$ exists and that limit is independent of $\y>0$.

We note, moreover, that $\lim_{y \searrow 0}U(y) >0$.
Indeed, since $\lim_{y \searrow 0}U(y)$ exists, the de l'Hospital rule shows that
\eq{\label{eq;limit-relation-U}
  \frac{\sg_{d}}{d}
  \lim_{y \searrow 0}
  U(y)
  =
  \sg_{d}
  \lim_{y \searrow 0}
  \frac{U(y)y^{d-1}}{dy^{d-1}}
  =
  \sg_{d}
  \lim_{y \searrow 0}
  \frac{\int_{0}^{y}U(s)s^{d-1}\, \ds}{y^{d}}
  =
 \lim_{y \searrow 0 }
  \frac{\mathcal{M}(y)}{y^{d}}
 .
}
Let $\underline{m}=\underline{m}(t,r)$ solve problem \eqref{eq;Bessel} with the initial datum $m_{0}(r)=\ep 2\sg_{d}r^{d-2}$ and $\lam = d-1$. Solutions of this problem \eqref{eq;Bessel} are the integrated radial solutions of the usual heat equation, so that $\underline{m}(t,r)\le M(t,r)$, as in Lemma \ref{lem;super-sol-msol}. 
According to formula \eqref{eq;m-explicit-form} of Lemma \ref{lem;Bessel-semigroup}, 
$\underline{m}$ has the form  
$$
 \underline{m}(t,r)
 =
 \ep\frac{\sg_{d}}{2}
  t^{-1}
  r^{d}
  {\rm e}^{-\frac{r^{2}}{4t}}
  \int_{0}^{1}
   s^{\frac{d}2-1}
   %(1-s)^{1-1}
   {\rm e}^{\frac{r^{2}}{4t}s}
  \,
  \ds.
$$ 
Then we have
$$
 t^{\frac{d}2-1}\underline{m}
 \bigg(
 1,\frac{r}{\sqrt{t}}
 \bigg)
 =
 \underline{m}(t,r)
 \le
 M^{*}(t,r)
 =
 t^{\frac{d}2-1}
 \mathcal{M}
 \bigg(
  \frac{r}{\sqrt{t}}
 \bigg)
$$
or equivalently 
\eqn{ 
 \underline{m}(1,y)
 \le 
 \mathcal{M}(y)
}
for $y>0$,
and thus
\eqn{ 
 \spl{
 \frac{\mathcal{M}(y)}{y^{d}}
 \ge
 \frac{\underline{m}(1,y)}{y^{d}}
 = 
 \ep\frac{\sg_{d}}{2}
 {\rm e}^{-\frac14 y^{2}}
 \int_{0}^{1}
  s^{\frac{d}2-1}{\rm e}^{\frac14 y^{2} s}
 \, \ds.
 }
}
Since ${\rm e}^{-\frac14 y^{2}} \to 1$ 
and 
$
 \int_{0}^{1}
  s^{\frac{d}2-1}{\rm e}^{\frac14 y^{2} s}
 \, \ds
 \to 
 \int_{0}^{1}
  s^{\frac{d}2-1}
 \, \ds
 =
 \frac2d
$
as $y \searrow 0$,
we obtain
\eq{\label{eq;lower-bound-profile-limit}
 \lim_{y \searrow 0} 
 \frac{\mathcal{M}(y)}{y^{d}}
 \ge
 \frac{\sg_{d}}{d} \ep 
 .
}
Relation \eqref{eq;limit-relation-U} 
and the lower bound \eqref{eq;lower-bound-profile-limit} finally show that
$$
 \lim_{y \searrow 0}
 U(y)
 \ge 
 \ep
 >0.
$$
This fact indicates that $U(y) > 0$ for $y \ge 0$. Indeed,
$$
 0
 <
 f(0)
 \lim_{z \searrow 0} U(z)
 =
 \lim_{z \searrow 0}
 f(z)U(z) 
 \le
 f(y_{*})U(y_{*})
 \le
 f(y)U(y)
$$
for $y \ge y_{*}$ since
$(Uf)'=\frac{d-2}{2}\frac{\mathcal{M}(y)}{y^{d-1}} \ge 0$ for $y>0$.

Moreover, 
$U(y)\to 0$ as $y\to\infty$ holds, and therefore $\|U\|_\infty<\infty$. 
To see this, we first note that the function $\mathcal{M}=\mathcal{M}(y)$
satisfies 
\eq{\label{eq;M's-estimate}
0
\le
\mathcal{M}(y)
\le \ep 2\sg_{d}y^{d-2} \quad
\text{for} \quad
y > 0}%
from relation \eqref{eq;M-sss} and the inequality $0 \le M^{*}(t,r) \le \ep 2 \sg_{d}r^{d-2}$
for $t>0$ and $r>0$.
According to inequality \eqref{eq;M's-estimate} and definition \eqref{eq;integral-factor-f},
it holds that 
$
\lim_{y \to \infty}
\frac{\mathcal{M}(y)}{\sg_{d} y^{d}} 
=0
$,
$
 f(y) \to \infty \ (y \to \infty)
$, 
$f'(y) > 0 \ (y > y_{*})$
and
$
\int_{y_{*}}^{y}
 sf(s) 
\, \ds
\to \infty \ (y \to \infty).
$
Then, we observe
$$
 \lim_{y \to \infty}
 \frac{yf(y)}{2f'(y)}
 =
 \lim_{y \to \infty}
 \frac{yf(y)}{2\big( \frac{y}{2} + \frac{\mathcal{M}(y)}{\sg_{d} y^{d-1}}\big)f(y)}
 =
 \lim_{y \to \infty}
 \frac{1}{1+ 2\frac{\mathcal{M}(y)}{\sg_{d} y^{d}}}
 =1,
$$
and thus, the de l'Hospital rule shows that
\eq{\label{eq;limit-second-term-U}
 \lim_{y \to \infty}
 \frac{\int_{\y}^{y} sf(s) \, \ds}{2f(y)}
 =
 \lim_{y \to \infty}
 \frac{yf(y)}{2f'(y)}
 =1.
}%
Combining equation \eqref{eq;U-explicit-form} 
and formula \eqref{eq;limit-second-term-U},
we obtain 
\eqn{
 \spl{
 \lim_{y \to \infty}
  U(y)
 -
 \frac{d-2}2
 =&
 \lim_{y \to \infty}
 \frac{f(\y)}{f(y)}
 \left(
  U(y_{*})
  -
  \frac{d-2}2
 \right)
 -
 \frac{d-2}{2}
 \lim_{y \to \infty}
 \frac{1}{2f(y)}
 \int_{y_{*}}^{y}
  sf(s)
 \, \ds\\
 =&
 -
 \frac{d-2}2.
 }
}
Therefore we conclude that
$\lim_{y \to \infty} U(y) = 0$, 
which yields that $U \in L^{\infty}(0,\infty)$
and
$$
 \|
  u^{*}(t)
 \|_{\infty}
 =\frac1t \| U \|_{\infty}\qquad
 \text{for}
 \quad
 t>0.
$$ 
\end{proof}

\begin{remark}
 We note that 
 the profile $\mathcal{M}$ satisfies the following upper and lower bounds:
 $$
  \underline{m}(1,y)
  \le
  \mathcal{M}(y)
  \le
  \overline{m}(1,y)
 $$
 for $y > 0$.
 Moreover, from relation \eqref{eq;m-explicit-form} and self-similarity of $m(t,r)$ in Remark \ref{rem;self-similarity-of-m},
 we obtain
 \eqn{
  \spl{
   \frac{{m}(1,y)}{y^{d-2}}
   =&
   c_{0}
   \frac{1}{\Gamma( \frac{\lambda -d +3}{2} )}
   \int_{0}^{\frac{y^{2}}{4}}
   \bigg(
    1
    -
    \frac4{y^{2}}
    z
   \bigg)^{\frac{d}2-1}
   z^{\frac{\lambda -d +3}{2}-1}
   {\rm e}^{-z}
   \, {\rm d}z
   \to
   c_{0} 
%   \frac{1}{\Gamma( \frac{\lambda -d +3}{2} )}
%   \Gamma\Big( \frac{\lambda -d +3}{2} \Big)
   \quad (y \to \infty)
  } }
 for $n-3 < \lambda \le n-1$.
 Thanks to this fact,
 it holds that
 $$
  \lim_{y \to \infty}
  \frac{\underline{m}(1,y)}{y^{d-2}}
  =
  \lim_{y \to \infty}
  \frac{\overline{m}(1,y)}{y^{d-2}}
  =\ep 2\sg_{d},
 $$
 and thus 
 $$
  \lim_{y \to \infty}
  \frac{\mathcal{M}(y)}{y^{d-2}}
  =\ep 2\sg_{d}.
 $$
 By applying the same argument of the last part of the proof for Lemma \ref{lem;profile-boundedness}, we observe
 $$
  \lim_{y \to \infty} 
   y^{2}U(y)
  =
  \ep 2(d-2),
 $$
 which finally shows that
 $
  u^{*}(t,\cdot )
  \to
  \ep u_{C}
 $
 uniformly on compact subsets in $\re^{d} \setminus \{ 0 \}$ as $t \searrow 0$.
 Furthermore, we observe 
 \eq{\label{eq;initial-value-of-derivative-U}
  U'(0) 
  :=
  \lim_{y \searrow 0}
  U'(y)
  = 0 
  \quad
  {\rm and}
  \quad
  U''(0)
  =
  -\frac1n
  U(0)
  \big(
   1
   +
  U(0)
  \big)
  <0,
 }
 where $U(0) := \lim_{y \searrow 0} U(y)$.
 Indeed, 
 the de l'Hospital rule and relation \eqref{u-sss} show that
 $$
  \lim_{y \searrow 0} 
  \frac{\mathcal{M}(y)}{\sg_{d}y^{d-1}}
  =
  \frac1{\sg_{d}}
  \lim_{y \searrow 0} 
  \frac{\mathcal{M}'(y)}{(d-1)y^{d-2}}
  =
  \frac{1}{d-1}
  \lim_{y \searrow 0} 
  y U(y)
  =
  0
%  =
%  \frac{\sg_{d}}{d-1} 0 \cdot U(0)
 $$
 and
 $$
  \lim_{y \searrow 0}
  U'(y)
  =
  -
  \lim_{y \searrow 0}
  \left(\frac{y}{2}+\frac{\M(y)}{\s_dy^{d-1}}\right)U(y)
  +
  \frac{d-2}{2}
  \lim_{y \searrow 0}
  \frac{\M(y)}{\s_dy^{d-1}}
  =0.
 $$
 Combining this fact with relation \eqref{eq;limit-relation-U}, we have
 \eqn{
  \spl{
%   \frac{U'(y)-U'(0)}{y-0}
%   =&
   \frac{U'(y)}{y}
%   \\
   =&
   -
   \bigg(
    \frac{1}{2}+\frac{\M(y)}{\sg_d y^{d}}
   \bigg)
   U(y)
   +
   \frac{d-2}{2}\frac{\M(y)}{\sg_d y^{d}}
   \to
   -
   \bigg(
    \frac12 + \frac1d U(0)
   \bigg)
   U(0)
   +
   \frac{d-2}2
   \cdot 
   \frac1d
   U(0)
  }
 }
 as $y \searrow 0$.
 Therefore we obtain the second part of desired relation \eqref{eq;initial-value-of-derivative-U}.
\end{remark}

\begin{remark}
Note that, since the o.d.e. \eqref{M-sss} for self-similar profiles --- simpler than the p.d.e. in problem \eqref{eq;local-mass} --- also has singular coefficients, this makes its direct analysis delicate as well.  
\end{remark}

\bigskip

We state a version of the Strauss type Radial Lemma (see, e.g., \cite[Radial Lemma 1]{Strauss}) to deal with radially symmetric solutions to 
the Poisson equation.
We skip its proof because it can be found in \cite{BKZ-Nsymm,BKZ-AiM}.

\begin{lemma}[{\cite[Lemma 2.1]{BKZ-Nsymm}, \cite[Lemma 4.2]{BKZ-AiM}}]\label{lem;radial-estimate-psi}
 Let $f \in L^{1}_{{\rm loc}}(\re^{d})$ be a radially symmetric function,
 such that $\psi = E_{d}*f$, solves the Poisson equation $-\Delta \psi = f$.
 Then it holds that for $x \in \re^{d}$,
 \eq{\label{eq;radial-estimate-psi}
  x \cdot \N \psi (x)
  =
  -
  \frac{1}{\sg_{d}|x|^{d-2}}
  \int_{\{ |y| \le |x| \}}
   f(y)
  \,
  \dy.
 }
\end{lemma}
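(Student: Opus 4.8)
The plan is to exploit the radial symmetry of $f$ in order to reduce identity \eqref{eq;radial-estimate-psi} to a one–dimensional computation governed by Gauss's law. Since $f$ is radially symmetric, so is the Newtonian potential $\psi=E_d*f$, and we may write $\psi(x)=\Psi(|x|)$ for a function $\Psi$ of the single variable $r=|x|$. Consequently $\N\psi(x)=\Psi'(|x|)\frac{x}{|x|}$, so that
\be
 x\cdot\N\psi(x)=|x|\,\Psi'(|x|),
 \label{eq;plan-reduction}
\ee
and the whole matter reduces to identifying $\Psi'(r)$.

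First I would integrate the Poisson equation $-\L\psi=f$ over the ball $\{|y|\le r\}$ and convert the left–hand side by the divergence theorem. On the sphere $\{|y|=r\}$ the outward unit normal is $y/|y|$, and radial symmetry gives $\N\psi\cdot\frac{y}{|y|}=\Psi'(r)$, a constant over that sphere, whose surface measure equals $\sg_d r^{d-1}$. Hence the outward flux equals $\sg_d r^{d-1}\Psi'(r)$, and
\be
 \int_{\{|y|\le r\}} f(y)\dy=-\int_{\{|y|\le r\}}\L\psi\dy=-\sg_d r^{d-1}\Psi'(r),
 \label{eq;plan-gauss}
\ee
so that $\Psi'(r)=-\frac{1}{\sg_d r^{d-1}}\int_{\{|y|\le r\}}f(y)\dy$. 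Substituting this into \eqref{eq;plan-reduction} and using $|x|\cdot|x|^{-(d-1)}=|x|^{-(d-2)}$ reproduces exactly formula \eqref{eq;radial-estimate-psi}. Note that this route introduces no boundary term at the origin, since the integration is carried over the full ball whose boundary is only the outer sphere.

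The one delicate point, which I expect to be the main obstacle, is the low regularity of $f$: the hypothesis $f\in L^1_{\text{loc}}(\re^d)$ does not by itself ensure that $\psi$ is of class $\mathcal C^2$, so the divergence theorem in \eqref{eq;plan-gauss} cannot be invoked verbatim. I would remove this difficulty by a standard approximation argument, mollifying $f$ to obtain smooth radial functions $f_\eps\to f$ in $L^1_{\text{loc}}$, for which $\psi_\eps=E_d*f_\eps$ is smooth and the identity holds classically by the computation above, and then passing to the limit $\eps\to0$ on both sides of the already–established formula. Alternatively, and more robustly under the stated hypotheses, one may argue directly from the representation $\N\psi=\N E_d*f$ with $\N E_d(x)=-\frac{x}{\sg_d|x|^d}$: Newton's shell theorem shows that a radial shell $\{|y|=\rho\}$ acts on the point $x$ as a point charge placed at the origin when $\rho<|x|$ and exerts no net force when $\rho>|x|$, which after integration in $\rho$ yields \eqref{eq;radial-estimate-psi} with no regularity assumption on $\psi$. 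In this second route the care goes into justifying the angular averaging and the (integrable) singularity of the kernel $\frac{x-y}{|x-y|^d}$ at $y=x$, via Fubini and dominated convergence.
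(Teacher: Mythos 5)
The paper itself offers no proof of this lemma: it is quoted from \cite{BKZ-Nsymm,BKZ-AiM} and the authors explicitly skip the argument, so the only meaningful comparison is with the proofs in those references. Your proposal is correct, and your second route is essentially the argument used there: writing $\N\psi=\N E_d*f$ with $\N E_d(x)=-\frac{x}{\sg_d|x|^d}$, decomposing the radial density into spherical shells $\{|y|=\rho\}$, and invoking Newton's theorem (equivalently, harmonicity of $E_d(x-\cdot)$ away from $x$ plus the classification of radial harmonic functions), so that a shell with $\rho<|x|$ acts as a point mass at the origin while a shell with $\rho>|x|$ exerts no radial force; integration in $\rho$ then gives \eqref{eq;radial-estimate-psi} under exactly the stated hypotheses. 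Your first route (Gauss's law plus regularization) is also sound, but the mollification is cleaner if you smooth $\psi$ rather than $f$: forming $E_d*f_\eps$ from scratch raises the question of whether that convolution is finite, whereas with a radial mollifier $\eta_\eps$ the function $\psi_\eps=\psi*\eta_\eps$ is automatically well defined, smooth, radial, and solves $-\L\psi_\eps=f_\eps$ with $f_\eps=f*\eta_\eps$, so the divergence-theorem computation applies verbatim; then $\N\psi_\eps=\N\psi*\eta_\eps\to\N\psi$ at a.e.\ $x$ and $\int_{\{|y|\le r\}}f_\eps\,\dy\to\int_{\{|y|\le r\}}f\,\dy$ for every $r$, which yields the identity almost everywhere, i.e.\ wherever both sides make sense under the $L^1_{\rm loc}$ hypothesis.
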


Finally, we prove our main result.

\begin{proof}[Proof of Theorem \ref{main}]  
Since the function $M^{*}=M^{*}(t,r)$ is a classical solution of problem \eqref{eq;local-mass},
we may differentiate the first equation of problem \eqref{eq;local-mass} with respect to the variable $r>0$.
Then we have
\eq{\label{eq;differentiated-local-mass}
 M^{*}_{rt}-M^{*}_{rrr}+(d-1)\frac{rM^{*}_{rr}-M^{*}_{r}}{r^{2}}
 -
 \left[
     M^{*}_{r}
  \left(
  \frac{1}{\sg_{d}r^{d-1}}M^{*}_{r}
  \right)
  +
  M
 \left(
  \frac{1}{\sg_{d}r^{d-1}}M^{*}_{r}
  \right)_{r}
 \right]
 =0.
}
We note that
\eq{\label{eq;formulas-u}
 \spl{
  M^{*}_{r}
  =& 
  \sg_{d}r^{d-1}u^{*},
  \quad
  u^{*}
  =
  \frac{1}{\sg_{d}r^{d-1}}M^{*}_{r},
  \\
  M^{*}_{rt}
  =&
  \sg_{d}r^{d-1}u^{*}_{t},\quad
  M^{*}_{rr}
  =
  \sg_{d}
  \big[
   (d-1)r^{d-2}u^{*}+r^{d-1}u^{*}_{r}
  \big],\\
  M^{*}_{rrr}
  =&
  \sg_{d}
  \big[
   (d-1)(d-2)r^{d-3}u^{*}
   +
   2(d-1)r^{d-2}u^{*}_{r}
   +
   r^{d-1}u^{*}_{rr}
  \big].
 }
}
When we substitute formulas \eqref{eq;formulas-u} in equation \eqref{eq;differentiated-local-mass}, we obtain
\eqn{
 \spl{
  &\sg_{d}r^{d-1}u^{*}_{t}
  -
  \sg_{d}
  r^{d-1}
  \left[
   (d-1)(d-2)
   \frac{u^{*}}{r^{2}}
   +
   2(d-1)
   \frac{u^{*}_{r}}{r}
   +
   u^{*}_{rr}
  \right]\\
  &
  +
  (d-1)
  \sg_{d}
  r^{d-1}
  \frac{\big[(d-1)u^{*}+ru^{*}_{r}\big]-u^{*}}{r^{2}}
  -
  \sg_{d}
  r^{d-1}
  \left[
   (u^{*})^{2}
   +
   u^{*}_{r}
   \frac{1}{r^{d-1}}\int_{0}^{r}u^{*}(t,s) s^{d-1} \, \ds
  \right]
  =0
 }
}
or equivalently
\eq{\label{eq;u-radial}
 \spl{
 u^{*}_{t}
 -
 u^{*}_{rr}
 -
 \frac{d-1}{r}
 u^{*}_{r}
 -
  (u^{*})^{2}
 -
  u^{*}_{r}
  \frac{1}{r^{d-1}}
  \int_{0}^{r} 
   u^{*}(t,s) s^{d-1}
  \, \ds
  =0.
  }
}
Applying formula \eqref{eq;radial-estimate-psi} of Lemma \ref{lem;radial-estimate-psi} to radially symmetric solutions $\psi^{*}=\psi^{*}(t,x)$, we have
\eq{\label{eq;Strauss-est}
 \spl{
 \psi^{*}_{r}(t,r)
 =
 \frac{x}{|x|} \cdot \N_{x} \psi^{*}(t,x)
 =&
 -
 \frac{1}{\sg_{d}|x|^{d-1}}
 \int_{\{ |y| \le |x|\}}
  u^{*}(t,y)
 \,
 \dy\\
 =&
 -
 \frac1{r^{d-1}}
 \int_{0}^{r}
 u^{*}(t,s) s^{d-1}
 \, \ds,
 }
}
where $r=|x|$.
Combining formula \eqref{eq;Strauss-est} with equation \eqref{eq;u-radial},
we observe
$$
 -u^{*}_{r}(t,r)
  \frac{1}{r^{d-1}}
  \int_{0}^{r} 
   u^{*}(t,s) s^{d-1}
  \, ds
  =
  u^{*}_{r}(t,r)\psi^{*}_{r}(t,r)
  =
  \N_{x} u^{*}(t,x) \cdot \N_{x} \psi^{*}(t,x)
  ,
$$
and thus
\eqn{
 \spl{
  u^{*}_{t}-\Delta u^{*}+\N \cdot (u^{*}\N \psi^{*})
  =& u^{*}_{t}-\Delta u^{*}+u^{*}\Delta \psi^{*} + \N u^{*} \cdot \N \psi^{*}\\
  =&  u^{*}_{t}-u^{*}_{rr}-\frac{d-1}{r}u^{*}_{r}-(u^{*})^{2}+u^{*}_{r}\psi^{*}_{r}=0,
 }
}
which indicates the first equation of problem \eqref{eq;DD}. 
Estimate \eqref{U:R} is a direct consequence of estimate \eqref{eq;apriori-M^{K}}.
Moreover, the self-similar property of solution $u^{*}=u^{*}(t,x)$ is implied by that of the function $M^{*}=M^{*}(t,r)$, see also \rf{eq;M-sss-2}.
The profile $U=U(x)$ constructed in Lemma \ref{lem;profile-boundedness} 
is in the class $\mathcal{C}^{\infty}(\re^{n}) \cap L^{\infty}(\re^{n})$ by Lemma \ref{lem;profile-boundedness} and the standard application of the parabolic  regularity argument (cf. Theorem \ref{glo}). 
Hence, we complete our proof of Theorem \ref{main}. 
\end{proof}

%%%%%%%%%%%%%%%%%%%%%

\section{Nonexistence of self-similar solutions}\label{sec:non}

In this last section, we give the proof of relation \eqref{bound-C(d)}.

\begin{proposition}\label{prop;bounds-C(d)}
System \rf{eq;DD} supplemented with the initial condition \rf{u0:m} 
cannot have any local-in-time solution if 
$\varepsilon > C(d)$. 
Moreover, for $d \ge 3$, it holds that 
\eqn{
1<\frac{2}{d-1}\left(\frac{\Gamma\left(\frac{d+1}{2}\right)}{\Gamma\left(\frac{d}{2}\right)}\right)^2 < C(d) <\left(\frac{2}{d-2}\right)^{\frac12}\frac{\Gamma\left(\frac{d+1}{2}\right)}{\Gamma\left(\frac{d}{2}\right)} <\frac{d-1}{d-2}\le 2. 
}

\end{proposition}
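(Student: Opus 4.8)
The first assertion—nonexistence of any local-in-time solution when $\varepsilon>C(d)$—is not new: it is exactly Remark \ref{main:blowup} and follows from \cite[Theorem 2.2]{BZ-JEE}, so only the two-sided bound \eqref{bound-C(d)} needs an argument. The plan is first to put $C(d)$ in a transparent form. Writing $2(d-2)+4\rho^2=4(a+\rho^2)$ with $a\equiv\frac{d-2}{2}$, one has
\[
 C(d)=\frac{4}{\Gamma(\frac{d}{2})}\int_0^\infty\frac{{\rm e}^{-\rho^2}\rho^{d+1}}{a+\rho^2}\drho,
\]
and it is convenient to record the elementary moments $I_k\equiv\int_0^\infty{\rm e}^{-\rho^2}\rho^k\drho=\frac12\Gamma(\frac{k+1}{2})$, used repeatedly below.

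For the upper estimate $C(d)<(\frac{2}{d-2})^{1/2}\frac{\Gamma(\frac{d+1}{2})}{\Gamma(\frac{d}{2})}$ I would bound the denominator from below by the arithmetic–geometric mean inequality $a+\rho^2\ge 2\sqrt a\,\rho$, which is strict except at the single point $\rho=\sqrt a$; this gives $\frac{\rho^{d+1}}{a+\rho^2}\le\frac{\rho^d}{2\sqrt a}$ with equality on a null set, whence $C(d)<\frac{2}{\sqrt a\,\Gamma(d/2)}I_d=\frac{\Gamma(\frac{d+1}{2})}{\sqrt a\,\Gamma(\frac d2)}$, and $1/\sqrt a=(\frac{2}{d-2})^{1/2}$ yields the claim. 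For the lower estimate $\frac{2}{d-1}(\frac{\Gamma(\frac{d+1}{2})}{\Gamma(\frac d2)})^2<C(d)$ I would instead apply Cauchy–Schwarz in the factorization $I_d=\int_0^\infty\frac{{\rm e}^{-\rho^2/2}\rho^{(d+1)/2}}{\sqrt{a+\rho^2}}\cdot{\rm e}^{-\rho^2/2}\rho^{(d-1)/2}\sqrt{a+\rho^2}\,\drho$, which gives $I_d^2<\left(\int_0^\infty\frac{{\rm e}^{-\rho^2}\rho^{d+1}}{a+\rho^2}\drho\right)\left(aI_{d-1}+I_{d+1}\right)$, the inequality being strict because the ratio $\rho/(a+\rho^2)$ of the two factors is non-constant. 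Evaluating the moments one finds $aI_{d-1}+I_{d+1}=\frac{d-1}{2}\Gamma(\frac d2)$ (using $I_{d+1}=\frac d4\Gamma(\frac d2)$ and $a=\frac{d-2}{2}$), and substituting back reproduces exactly the asserted lower bound.

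It remains to control the Gamma ratio itself. Set $R(x)\equiv\Gamma(x+\frac12)/\Gamma(x)$, so the quantity in \eqref{bound-C(d)} is $R(\frac d2)$. The structural facts I would use are the functional relation $R(x)R(x+\frac12)=\Gamma(x+1)/\Gamma(x)=x$ and the monotonicity of $R$, the latter since $\frac{d}{dx}\log R(x)=\psi(x+\frac12)-\psi(x)>0$ (the digamma $\psi$ is strictly increasing, $\Gamma$ being strictly log-convex). With $R$ increasing, $R(x)^2<R(x)R(x+\frac12)=x$, while replacing $x$ by $x-\frac12$ gives $R(x)^2>R(x)R(x-\frac12)=x-\frac12$; thus $\sqrt{x-\frac12}<R(x)<\sqrt x$, both strict. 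At $x=\frac d2$ the left inequality is $(\frac{\Gamma(\frac{d+1}{2})}{\Gamma(\frac d2)})^2>\frac{d-1}{2}$, i.e. the first inequality $1<\frac{2}{d-1}(\frac{\Gamma(\frac{d+1}{2})}{\Gamma(\frac d2)})^2$. For the fourth inequality $(\frac{2}{d-2})^{1/2}\frac{\Gamma(\frac{d+1}{2})}{\Gamma(\frac d2)}<\frac{d-1}{d-2}$ I would combine $R(x)^2<x$ with the purely algebraic fact $x<\frac{(2x-1)^2}{4(x-1)}$ for $x>1$ (their difference is $\frac{1}{4(x-1)}>0$); chaining $R(x)^2<x<\frac{(2x-1)^2}{4(x-1)}$ and taking square roots gives $R(\frac d2)<\frac{d-1}{\sqrt{2(d-2)}}$, which is the desired bound after multiplying by $(\frac{2}{d-2})^{1/2}$. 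Finally $\frac{d-1}{d-2}\le 2$ is equivalent to $d\ge 3$. The main obstacle I anticipate is precisely this fourth inequality: since $x$ and $\frac{(2x-1)^2}{4(x-1)}$ are asymptotically equal as $d\to\infty$, the log-convexity bound $R(x)<\sqrt x$ is only barely strong enough, so one must verify carefully that the algebraic comparison runs in the correct direction and that every intervening inequality is strict.
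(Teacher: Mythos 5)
Your proof is correct, and for the two inequalities that actually involve $C(d)$ it is, up to the normalization $2(d-2)+4\rho^2=4\left(a+\rho^2\right)$ with $a=\frac{d-2}{2}$, identical to the paper's argument: the upper bound comes from the same AM--GM estimate on the denominator, and the lower bound from the same Cauchy--Schwarz splitting of $\int_0^\infty {\rm e}^{-\rho^2}\rho^{d}\,{\rm d}\rho$ into the factor containing $(a+\rho^2)^{-1/2}$ and its complementary factor; your moment computations agree with the paper's. What you genuinely add is a proof of the outer pair of inequalities in \eqref{bound-C(d)}, which the paper asserts but never proves (its written proof establishes only the two middle ones): setting $R(x)=\Gamma(x+\frac12)/\Gamma(x)$, you derive the Wendel-type bounds $\sqrt{x-\frac12}<R(x)<\sqrt{x}$ from the functional identity $R(x)R(x+\frac12)=x$ combined with strict monotonicity of $R$ (log-convexity of $\Gamma$), which yields the first inequality at $x=\frac{d}{2}$, and you then settle the fourth inequality through the elementary comparison $x<\frac{(2x-1)^2}{4(x-1)}$ for $x>1$. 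Two further merits of your write-up: you observe that the Cauchy--Schwarz step is strict because the two factors have the non-constant ratio $\rho/(a+\rho^2)$, whereas the paper writes this step with ``$\le$'' even though the proposition claims strict inequality; and the ``obstacle'' you anticipated in the fourth inequality is real but harmless, since $\frac{(2x-1)^2}{4(x-1)}-x=\frac{1}{4(x-1)}>0$ for all $x>1$, the gap merely shrinking as $d\to\infty$, consistently with the paper's remark that $C(d)\to 1$.
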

\begin{proof} 
It follows from the blowup criterion for equation \rf{eq;DD} in \cite[Theorem 2.2]{BZ-JEE} that if $\varepsilon > C(d)$, then solutions of equation \rf{eq;DD} with $u_0=\varepsilon u_C$ cannot exist for any $T>0$. 
Indeed, according to this criterion if 
$$
T{\rm e}^{T\Delta}u_0(0)>C(d)
$$
for some $T>0$, then the solution with $u_0$ as the initial datum blows up before time $T$. For $u_0=\ep u_C$ this leads to $\ep> C(d)$. 

Clearly, for $d\ge 3$ we have $C(d)\in(1,2)$ as was proved in \cite{BZ-JEE} but a more precise, yet simple, estimate \rf{bound-C(d)} for $C(d)$ is available. 
To prove the upper bounds, observe that by the inequalities between harmonic, geometric and arithmetic means, the denominator of the integrand in \rf{C(d):0} satisfies 
$$
\frac{1}{2(d-2)+4\r^2}\le \frac{1}{4\sqrt{2(d-2)}\r}\le \frac14\left(\frac{1}{2(d-2)}+\frac{1}{4\r^2}\right)
$$
with  a strict inequality whenever $2(d-2)\ne 4\r^2$. 
Then, we have 
$$
C(d)< \frac{16}{\Gamma\left(\frac{d}{2}\right)}\frac{1}{4\sqrt{2(d-2)}}\int_0^\infty {\rm e}^{-\r^2}\r^d \drho = \left(\frac{2}{d-2}\right)^{\frac12} \frac{\Gamma\left(\frac{d+1}{2}\right)}{\Gamma\left(\frac{d}{2}\right)}.
$$
The lower bound for $C(d)$ is obtained using a consequence of the Cauchy inequality
\bea
\left(\int_0^\infty{\rm e}^{-\r^2}\r^d\drho\right)^2 &\le& \int_0^\infty{\rm e}^{-\r^2}\frac{\r^{d+1}}{2(d-2)+4\r^2}\drho\times \int_0^\infty{\rm e}^{-\r^2}\r^{d-1}\left(2(d-2)+4\r^2\right)\drho\nonumber  \\
&=&\frac{\Gamma\left(\frac{d}{2}\right)}{16}C(d)\frac12\left(2(d-2)\Gamma\left(\frac{d}{2}\right)+4\frac{d}{2}\Gamma\left(\frac{d}{2}\right)\right)\nonumber
\eea 
so that 
$$
C(d)\ge \frac14\Gamma\left(\frac{d+1}{2}\right)\frac{16}{\Gamma\left(\frac{d}{2}\right)} \frac{2}{2(d-2)\Gamma \left(\frac{d}{2}\right)+4\frac{d}{2}\Gamma\left(\frac{d}{2}\right)} =\frac{2}{d-1} \left(\frac{\Gamma\left(\frac{d+1}{2}\right)}{\Gamma\left(\frac{d}{2}\right)}\right)^2 . 
$$
\end{proof}

\section*{Acknowledgments}
The authors wish to express their gratitude to Adam Nowak 
 for helpful comments concerning Bessel semigroups. 
The  first and the third named authors were supported by the Polish NCN grant \hbox{2016/23/B/ST1/00434}.

%\newpage

\end{document}